\newcommand{\R}{{\mathbb R}}
\newcommand{\D}{{\mathbb D}}
\newcommand{\blue}{\textcolor{black}}
\newcommand{\pa}{\partial}
\newcommand{\na}{\nabla}
\newcommand{\T}{\mathbb T}
\newcommand{\di}{\nabla\cdot}
\newcommand{\eps}{\varepsilon}
\newcommand{\intO}{\int_{\Omega}}
\newcommand{\ue}{\uu^{\eps}}
\newcommand{\dH}{d\mathcal{H}^{d-1}}
\newcommand{\ox}{\omega}
\newcommand{\ii}{\int_{\Omega_{\eps_2}}}
\newcommand{\thii}{\frac{1}{\eps_3}\int_{\eps_1}^{\eps_1+\eps_3}\int_\tau^t\ii}
\newcommand{\thiip}{\frac{1}{\eps_3}\int_{\eps_1}^{\eps_1+\eps_3}\int_\tau^t\int_{\pa\Omega_{\eps_2}}}
\newcommand{\vre}{\vr^{\eps}}
\newcommand{\GG}{\mathcal{G}}
\newcommand{\HH}{\mathcal{P}}
\newcommand{\dxte}{dxdsd\eps_2}
\newcommand{\dHte}{d\mathcal{H}^{d-1}(\theta)dsd\eps_2}
\newcommand{\intTd}{\int_{\T^d}}
\newtheorem{theorem}{Theorem}[section]
\newtheorem{lemma}[theorem]{Lemma}
\newtheorem{remark}[theorem]{Remark}
\newtheorem{definition}{Definition}[section]
\newcommand{\vr}{\varrho}
\newcommand{\uu}{\mathbf{u}}
\begin{document}

\title[Energy equalities for compressible Navier-Stokes equations]{Energy equalities for compressible Navier-Stokes equations}

\author[Q-H. Nguyen]{Quoc-Hung Nguyen}
\author[P-T. Nguyen]{Phuoc-Tai Nguyen}
\author[B. Q. Tang]{Bao Quoc Tang}

\address{Quoc-Hung Nguyen \hfill\break
Department of Mathematics, New York University Abu Dhabi, Abu Dhabi, United Arab Emirates}
\email{qn2@nyu.edu} 

\address{Phuoc-Tai Nguyen \hfill\break
	Faculty of Science, Department of Mathematics and Statistics, Masaryk University, 61137 Brno,
	Czech Republic}
\email{ptnguyen@math.muni.cz, nguyenphuoctai.hcmup@gmail.com} 

\address{Bao Quoc Tang \hfill\break
	Institute of Mathematics and Scientific Computing, University of Graz, Heinrichstrasse 36, 8010 Graz, Austria}
\email{quoc.tang@uni-graz.at, baotangquoc@gmail.com} 

\date{\today}

\thanks{}

\begin{abstract}
	The energy equalities of compressible Navier-Stokes equations with general pressure law and degenerate viscosities are studied. By using a unified approach, we give sufficient conditions on the regularity of weak solutions for these equalities to hold. The method of proof is suitable for the case of periodic as well as homogeneous Dirichlet boundary conditions. In particular, by a careful analysis using the homogeneous Dirichlet boundary condition, no boundary layer assumptions are required when dealing with bounded domains with boundary.
\end{abstract}

\keywords{Compressible Navier-Stokes equations; Inhomogeneous incompressible Navier-Stokes equations; Energy equalities; Onsager's conjecture}

% \paragraph{AMS classification:}
\subjclass[2010]{35Q30, 76B03, 76D05, 76N10}

\maketitle

%\tableofcontents

%%%%%%%%%%%%%%%%%%%%%%%%%%%%%%%%%%%%%%%%%%%%%%%%%%%%%%%%%%%%%%%%%%%%%%%%%%%%%%%
\tableofcontents

\section{Introduction and Main Results}
Let $d=2,3$ and $\Omega$ be either the torus $\T^d$ or a bounded domain in $\mathbb R^d$ with $C^2$ boundary $\partial\Omega$. 
This paper studies the energy equalities for compressible Navier-Stokes equation with \blue{degenerate viscosities and general pressure law}
\begin{equation}\label{gNS}\tag{gcNS}
	\begin{cases}
		\partial_t \vr + \di(\vr \uu) = 0, &\text{ in } \Omega \times (0,T),\\
		\partial_t (\vr \uu) + \di(\vr \uu\otimes \uu) + \blue{\nabla (p(\vr)) -\di(\nu(\vr)\D \uu) - \nabla(\mu(\vr)\di \uu)} = 0, &\text{ in } \Omega \times (0,T).
		%\uu = 0, &\text{ on } \partial\Omega\times (0,T),\\
		%(\vr\uu)(x,0)  = \vr_0(x)\uu_0(x), &\text{ in } \Omega,\\
		%\vr(x,0) = \vr_0(x), &\text{ in } \Omega,
	\end{cases}
\end{equation}
and the inhomogeneous incompressible Navier-Stokes equation
\begin{equation}\label{icNS}\tag{icNS}
\begin{cases}
\partial_t \vr + \di(\vr \uu) = 0, &\text{ in } \Omega \times (0,T),\\
\partial_t (\vr \uu) + \di(\vr \uu\otimes \uu) + \nabla P -\blue{\di (\nu(\varrho) \D \uu)} = 0, &\text{ in } \Omega \times (0,T),\\
\na\cdot \uu = 0, &\text{ in } \Omega\times (0,T),\\
%\uu = 0, &\text{ on } \partial\Omega\times (0,T),\\
%(\vr\uu)(x,0)  = \vr_0(x)\uu_0(x), &\text{ in } \Omega,\\
%\vr(x,0) = \vr_0(x), &\text{ in } \Omega,
\end{cases}
\end{equation}
with initial data 
\begin{equation} \label{initial}   \left\{ \begin{aligned}
(\vr\uu)(x,0) &= \vr_0(x)\uu_0(x), \quad x\in \Omega, \\
\vr(x,0) &= \vr_0(x), \quad x \in \Omega,
\end{aligned} \right.  \end{equation}
and homogeneous Dirichlet boundary condition 
\begin{equation} \label{boundary}  \uu = 0 \quad \text{on} \quad \pa\Omega\times(0,T).
\end{equation}
Here $T>0$ is the time horizon, $\varrho$ and $\uu$ denote the density and the velocity of the fluid, respectively, $\D=\frac{1}{2}[\nabla \uu + \nabla^\top \uu]$ stands for the strain tensor, and in the case of \eqref{icNS}, $P$ is the scalar pressure. \blue{The general pressure law $p(\vr)$, the viscosity coefficients $\nu(\vr)$ and $\mu(\vr)$ satisfy some conditions which will be specified later.} Naturally, the boundary condition $\uu = 0$ on $\partial\Omega\times(0,T)$ is only imposed in the case of bounded domain.

It is remarked that when $\nu(\vr) = \nu_0 \vr$ and $\mu(\vr) = 0$ for some constant $\nu_0>0$, and $p(\vr) = \vr^\gamma$ with $\gamma>1$, \eqref{gNS} reduces to the well-known isentropic compressible Navier-Stokes equation with linear degenerate viscosity and $\gamma$-pressure law
%We consider the compressible Navier-Stokes equation 
\begin{equation}\label{NS}\tag{cNSd}
\begin{cases}
\partial_t \vr + \di(\vr \uu) = 0, &\text{ in } \Omega \times (0,T),\\
\partial_t (\vr \uu) + \di(\vr \uu\otimes \uu) + \nabla(\vr^\gamma) -\nu_0 \nabla \cdot (\varrho \D \uu) = 0, &\text{ in } \Omega \times (0,T),\\
%\uu = 0, &\text{ on } \partial\Omega\times (0,T),\\
%(\vr\uu)(x,0)  = \vr_0(x)\uu_0(x), &\text{ in } \Omega,\\
%\vr(x,0) = \vr_0(x), &\text{ in } \Omega,
\end{cases}
\end{equation}
and when $\nu(\vr) \equiv \nu_0$ and $\mu(\vr) \equiv \lambda_0$, $p(\vr) = \vr^\gamma$ with $\gamma>1$, \eqref{gNS} reduces to
\begin{equation}\label{cNS-normal}\tag{cNS}
\begin{cases}
\partial_t \vr + \di(\vr \uu) = 0, &\text{ in } \Omega \times (0,T),\\
\partial_t (\vr \uu) + \di(\vr \uu\otimes \uu) + \nabla(\vr^\gamma) -\nu_0 \Delta \uu -\lambda_0 \nabla(\di \uu) = 0, &\text{ in } \Omega \times (0,T).
%\uu = 0, &\text{ on } \partial\Omega\times (0,T),\\
%(\vr\uu)(x,0)  = \vr_0(x)\uu_0(x), &\text{ in } \Omega,\\
%\vr(x,0) = \vr_0(x), &\text{ in } \Omega,
\end{cases}
\end{equation}

\medskip
Energy conservation is one aspect of the Onsager's conjecture which was announced in his celebrated paper on statistical hydrodynamics \cite{Ons49}. More precisely, Onsager \cite{Ons49} conjectured that, in the context of homogeneous incompressible Euler equations, kinetic energy is globally conserved for H\"older continuous solutions with the exponent greater than $1/3$, while energy dissipation phenomenon occurs for H\"older continuous solutions with the exponent less than $1/3$. The `positive' part of the conjecture was first proved by by Eyink \cite{Eyi94} and Constantin-E-Titi \cite{CET94} for the whole space $\R^d$ or periodic boundary conditions, i.e. $\T^d$. Significant contributions in the case of domains with boundary were recently achieved in \cite{BT18,BTW,DN18,NN18} for homogeneous incompressible Euler equations and in \cite{NNT18} for inhomogeneous incompressible and compressible isentropic Euler equations. The other direction of the Onsager's conjecture was initiated in the  groundbreaking paper of Scheffer \cite{Sch93}, then has reached to its full flowering with a work of Shnirelman \cite{Shn97}, a series of celebrated works of De Lellis and Sz\'ekelyhidi \cite{DS12, DS13, DS14, BDIS15}, and recently was settled by Isett in \cite{Ise18a, Ise18} and by Buckmaster et al. in \cite{BDSV18}. 

\medskip
Roughly speaking, Onsager's conjecture (for Euler equations) addresses the question how much regularities needed for a weak solution to conserve energy. In the context of classical incompressible Navier-Stokes equations, since global regularity in three dimensions has been a long standing open problem, it is natural to ask how much regularities needed for a weak solution to satisfy the {\it energy equality} (rather than energy conservation in the context of Euler equations). This topic of research has been studied already in the sixties starting with the work of Serrin \cite{Ser62} where he asserted that the energy equality must hold if $\uu \in L^s(0,T;L^q(\T^d))$ with $\frac{2}{s}+\frac{d}{q} \leq 1$. Later Shinbrot \cite{Shi74} assumed a condition which is independent of dimensions, more precisely $\uu \in L^s(0,T;L^q(\T^d))$ with $\frac{1}{s}+\frac{1}{q} \leq \frac 12$ and $q \geq 4$. By using a new approach based on a lemma introduced by Lions, C. Yu \cite{Yu16} obtained the same result as in \cite{Shi74}. All these results deal with either $\Omega = \R^d$ or $\Omega = \T^d$. Recently, C. Yu \cite{Yu17-2} considered the case where $\Omega$ is a bounded domain with $C^2$ boundary and proved the energy equality under additional condition $\uu \in L^s(0,T;B_s^{\alpha,\infty}(\Omega))$ (with $s>2$ and $\frac{1}{2}+\frac{1}{s}<\alpha<1$) which enables to deal with the boundary effects. Here $B_s^{\alpha,\infty}(\Omega)$ denotes the Besov space.

\medskip
Much less is known concerning energy equalities for compressible or inhomogeneous incompressible Navier-Stokes equations. Recent results for \eqref{NS} and \eqref{cNS-normal} in $\T^d$ were carried out by C. Yu \cite{Yu17} and by Akramov et al. \cite{ADSW}. More precisely, in \cite{Yu17}, Yu gave sufficient conditions on the regularity of the density $\varrho$ and the velocity $\uu$ for the validity of the energy equality. The framework employed in \cite{Yu17} is remarkably different from that in the incompressible case (see \cite{Yu16,Yu17-2}). In particular, solutions defined in \cite[Definition 1]{Yu17} are required to satisfy a set of regularities which allow to deduce the continuity of $(\sqrt{\varrho}\uu)(t)$ in the strong topology at $t=0$.  The existence of this kind of solutions is guaranteed by \cite{VY16} and \cite{LV18}. 

\medskip
Motivated by the aforementioned works, we present in this paper a unified approach to show energy equalities for the compressible \eqref{gNS} and the inhomogeneous incompressible Navier-Stokes equations \eqref{icNS}, \blue{in which we consider general pressure law and possible degenerate viscosities (see e.g. \cite{CP10, FGGW17, XZ})}. The main idea is inspired by our recent work for compressible Euler equations \cite{NNT18}, which is different from the method employed in e.g. \cite{Yu16, Yu17, Yu17-2}. In particular, we use the test function  $(\varrho^\varepsilon)^{-1}(\varrho \uu)^\varepsilon$ instead of $\uu^\varepsilon$, where {\it the convolution is only taken in spatial variables}. \blue{The choice of this test function allows to avoid time regularity for the density and the velocity, and to obtain mild conditions which are in fact weaker than previous works\footnote{Admittedly, our approach seems not directly applicable to the case with vacuum. In this case, it seems that more time regularity on the density or the velocity must be imposed.}}. We remark that this idea was also used for density-dependent Navier-Stokes equations  \cite{LS16} (see also \cite[Remark 4.2]{ FGGW17} for the Euler equations). % For instance, when $d = 3$ we assume only $\uu \in L^4(\T^d\times (0,T))$ while \cite{Yu17} required $\uu \in L^s(0,T;L^q(\T^d))$ where $\frac 1s + \frac 1q \leq \frac{5}{12}$ and $q\geq 6$ (see more discussion after Theorem \ref{cNS-torus}). 
Moreover, by carefully using the Dirichlet boundary conditions (in the case of bounded domains), we show that no additional regularities near the boundary on the velocity $\uu$ are required.

%{\color{red} (References about the degenerate viscosity and general pressure.)  }

\medskip

%the present paper, we aim to derive the global kinetic energy conservation for \eqref{NS}, \eqref{cNS-normal} and \eqref{icNS}. 

Before stating the main results, we introduce the definition of weak solutions.

\begin{definition} \label{solution} A couple $(\varrho,\uu)$ is called a weak solution to \eqref{gNS} with initial data \eqref{initial} if 
	\begin{itemize}
	\item[(i)] \begin{align}\label{eqdef1}
	\int_0^T \int_{\Omega} (\varrho \partial_t \varphi  + \varrho \uu\cdot \nabla \varphi ) dxdt = 0 
	\end{align}
	for every test function $\varphi \in C_0^\infty(\Omega \times (0,T))$.
	
	\item[(ii)] \begin{align}\label{eqdef2}
	\int_0^T \int_{\Omega} (\varrho \uu \cdot \partial_t \psi + \varrho \uu \otimes \uu : \nabla \psi + p(\vr) \nabla \cdot \psi +\nu(\vr) \D \uu : \nabla \psi + \mu(\vr)(\di\uu) (\di \psi)) dxdt = 0 
	\end{align}
	for every test vector field $\psi \in C_0^\infty(\Omega \times (0,T))^d$.	
	
	\item[(iii)] $\varrho(\cdot,t) \rightharpoonup \varrho_0$ in ${\mathcal D}'(\Omega)$ as $t \to 0$, i.e. 
	\begin{equation} \label{ini1}
	\lim_{t \to 0}\int_{\Omega} \varrho(x,t) \varphi(x)dx =  \int_{\Omega} \varrho_0(x) \varphi(x)dx 
	\end{equation}
	for every test function $\varphi \in C_0^\infty(\Omega)$. 
	
	\item[(iv)] $(\varrho \uu)(\cdot,t) \rightharpoonup \varrho_0 \uu_0$ in ${\mathcal D}'(\Omega)$ as $t \to 0$, i.e. 
	\begin{equation} \label{ini2}
	\lim_{t \to 0}\int_{\Omega}  (\varrho \uu)(x,t) \psi(x)dx =  \int_{\Omega} (\varrho_0 \uu_0)(x) \psi(x)dx 
	\end{equation}
	for every test vector field $\psi \in C_0^\infty(\Omega)^d$. 
\end{itemize}

\medskip
Weak solutions to  \eqref{icNS} can be defined similarly.
\end{definition} 

\begin{remark}\label{lowregdef}
The notion of weak solutions defined in Definition \ref{solution} requires only modest regularities of the density and the velocity, e.g. for \eqref{eqdef1} and \eqref{eqdef2} to make sense, one only needs $\vr, \vr\uu, \vr\uu\otimes\uu, p(\vr), \nu(\vr) \mathbb D\uu, \mu(\vr)\di\uu \in L_{loc}^1(\Omega \times (0,T))$. If one were to show energy equality for weak solutions defined in \cite{VY16} (or \cite{LV18}), as it was done in \cite{Yu17}, more regularities on these solutions are already given, see \cite[Theorem 1.2]{VY16} or \cite[Definition 1.1]{Yu17}.
\end{remark}
%
%For convenience, we define the following constant
%\begin{equation}\label{alphabeta}
%	\alpha = \min\{\gamma, 2\}.
%\end{equation}

\blue{Throughout this paper, we assume the following conditions on the viscosities and the pressure law:
	\begin{itemize}
		\item[(i)] the functions $\nu, \mu: (0,\infty) \to [0,\infty)$ are continuous,
		\item[(ii)] $p \in C^2(0,\infty)$.
		%\item[(ii)] $p(0) = 0$ and $p(\infty) = \infty$.
		\end{itemize}}

\begin{theorem}\label{cNS-torus} Let $\Omega = \T^d$ and $(\varrho,\uu)$ be a weak solution of \eqref{gNS} with initial data \eqref{initial}. Assume that 
		\begin{equation} \label{Condition}
		0<c_1\leq \varrho(x,t)\leq c_2<\infty \quad \text{and} \quad \uu\in L^\infty(0,T;L^2(\mathbb{T}^d))\cap L^2(0,T;H^1(\mathbb{T}^d)).
		\end{equation}

	We assume in the case $d = 2$ that
\begin{equation} \label{rhod=2}
	\sup_{t\in (0,T)}\sup_{|h|<\varepsilon}  |h|^{-\frac 12}\|\varrho(\cdot+h,t)-\varrho(\cdot,t)\|_{L^{2}(\mathbb{T}^2)} <\infty,
	\end{equation}
	and in the case $d = 3$ that
	\begin{equation} \label{rhod=3}
	\begin{gathered}
	\uu\in  L^4(\T^3 \times (0,T)) \quad \text{ and } \quad	\sup_{t\in (0,T)}\sup_{|h|<\varepsilon} |h|^{-\frac 12}\|\varrho(\cdot+h,t)-\varrho(\cdot,t)\|_{L^{\frac{12}{5}}(\T^3)} <\infty.
	\end{gathered}
	\end{equation}
	
	 Then the energy equality holds, i.e.\blue{
	\begin{equation} \label{conser1}
	\begin{aligned}
	\int_{\mathbb{T}^d}\left(\frac{1}{2}(\varrho|\uu|^2)(x,t)+\blue{\HH(\vr)(x,t)}\right)dx&+\int_{0}^{t}\int_{\mathbb{T}^d}\nu(\vr)|\mathbb{D} \uu|^2dxds + \int_0^t\int_{\T^d}\mu(\vr)|\di \uu|^2dxds\\
	&= \int_{\mathbb{T}^d}\left(\frac{1}{2}\varrho_0|\uu_0|^2+\blue{\HH(\vr_0)}\right)dx \quad \forall t\in (0,T),
	\end{aligned}
	\end{equation}}
	\blue{where $\HH(\vr)$ is defined by
	\begin{equation}\label{GH}
		\HH(\vr) = \vr \int_1^{\vr}\frac{p(z)}{z^2}dz.
	\end{equation}}
	
\end{theorem}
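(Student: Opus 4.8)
The plan is to pass to the energy identity by testing the momentum balance against a spatially regularized, density–weighted velocity. Write $f^{\eps}=f*\eta_{\eps}$ for the spatial mollification and set $\uu_{\eps}:=(\varrho^{\eps})^{-1}(\varrho\uu)^{\eps}$, which is smooth in $x$ and well defined because $\varrho^{\eps}\ge c_1>0$. Mollifying \eqref{gNS} in space yields, for each fixed $\eps$, the identities $\pa_t\varrho^{\eps}+\di(\varrho\uu)^{\eps}=0$ and the corresponding momentum equation, both holding pointwise in $t$ with the mollified fields absolutely continuous in time into $L^2(\T^d)$. Taking the scalar product of the mollified momentum equation with $\uu_{\eps}$ and integrating over $\T^d\times(0,t)$ is therefore legitimate. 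Using $\varrho^{\eps}|\uu_{\eps}|^2=|(\varrho\uu)^{\eps}|^2/\varrho^{\eps}$ and eliminating $\pa_t\varrho^{\eps}$ by the mollified continuity equation, the time term becomes $\frac{d}{dt}\int_{\T^d}\tfrac12\varrho^{\eps}|\uu_{\eps}|^2\,dx$ plus a lower order piece which, after one integration by parts, combines with the convective contribution $-\int_{\T^d}(\varrho\uu\otimes\uu)^{\eps}:\na\uu_{\eps}\,dx$ into the single flux commutator
\[
R_{\eps}:=\int_{\T^d}\frac{1}{\varrho^{\eps}}\Big[(\varrho\uu)^{\eps}\otimes(\varrho\uu)^{\eps}-\varrho^{\eps}(\varrho\uu\otimes\uu)^{\eps}\Big]:\na\uu_{\eps}\,dx .
\]

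Next I would identify the remaining terms in the limit $\eps\to0$. Since $c_1\le\varrho\le c_2$ and $\nu,\mu$ are continuous, the coefficients $(\nu(\varrho)\mathbb{D}\uu)^{\eps}$, $(\mu(\varrho)\di\uu)^{\eps}$ and $p(\varrho)^{\eps}$ converge strongly in $L^2(\T^d\times(0,T))$ to $\nu(\varrho)\mathbb{D}\uu$, $\mu(\varrho)\di\uu$ and $p(\varrho)$ respectively. A direct difference–quotient computation, using only $\varrho\in L^{\infty}$ and $\uu\in L^2(0,T;H^1)$, shows that $\na\uu_{\eps}$ is \emph{bounded} in $L^2(\T^d\times(0,T))$ uniformly in $\eps$: the only dangerous contributions contain $\na\varrho^{\eps}=O(\eps^{-1})$ but are multiplied by a factor that is $O(\eps)$ thanks to $\uu\in H^1$. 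As $\uu_{\eps}\to\uu$ in $L^2$, this gives $\na\uu_{\eps}\rightharpoonup\na\uu$ weakly in $L^2$, and the strong–weak pairing produces the dissipation integrals $\int_0^t\!\int_{\T^d}\nu(\varrho)|\mathbb{D}\uu|^2$ and $\int_0^t\!\int_{\T^d}\mu(\varrho)|\di\uu|^2$ together with $-\int_0^t\!\int_{\T^d}p(\varrho)\di\uu$. The last integral is converted into the internal energy $\HH(\varrho)$ of \eqref{GH} through the renormalized continuity equation $\pa_t\HH(\varrho)+\di(\HH(\varrho)\uu)+p(\varrho)\di\uu=0$, valid because $\varrho$ is a bounded DiPerna--Lions renormalized solution; integrating over $\T^d$ kills the divergence and integrating in time gives $\int_0^t\!\int_{\T^d}p(\varrho)\di\uu=\int_{\T^d}\HH(\varrho_0)-\int_{\T^d}\HH(\varrho)(t)$.

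The step I expect to be the main obstacle is proving $\int_0^t R_{\eps}\,ds\to0$. Because $\na\uu_{\eps}$ is only bounded — not strongly convergent — in $L^2$, this forces the commutator kernel $N:=(\varrho\uu)^{\eps}\otimes(\varrho\uu)^{\eps}-\varrho^{\eps}(\varrho\uu\otimes\uu)^{\eps}$ to tend to $0$ strongly in $L^2(\T^d\times(0,T))$. Expanding $N$ into Constantin--E--Titi commutators $C(f,g):=(fg)^{\eps}-f^{\eps}g^{\eps}$ and bounding each by $\sup_{|z|<\eps}\|\varrho(\cdot-z)-\varrho\|_{L^{p}}\,\|\uu(\cdot-z)-\uu\|_{L^{q}}$, the decay turns out to be borderline and is captured precisely by the hypotheses: in $d=2$, interpolating the half–derivative control \eqref{rhod=2} of $\varrho$ in $L^2$ with $\varrho\in L^{\infty}$, against $\uu\in H^1\hookrightarrow L^{r}$ (so in particular $\uu\in L^4(\T^2\times(0,T))$ by Ladyzhenskaya), yields a positive power of $\eps$; in $d=3$ one balances, via H\"older, the $L^{12/5}$ half–derivative bound on $\varrho$ in \eqref{rhod=3} with $\uu\in L^4$ and $\uu\in H^1\hookrightarrow L^6$. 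These are exactly the dimension–dependent conditions imposed in the statement.

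Finally, the boundary terms at $t=0$ require no strong time–continuity, which is the main advantage of the convolution test function. Evaluating the weak continuity \eqref{ini1}--\eqref{ini2} against the smooth kernel $\eta_{\eps}(x-\cdot)$ gives $\varrho^{\eps}(t)\to\varrho_0^{\eps}$ and $(\varrho\uu)^{\eps}(t)\to(\varrho_0\uu_0)^{\eps}$ in $L^2(\T^d)$ as $t\to0$, whence $\int_{\T^d}\tfrac12\varrho^{\eps}|\uu_{\eps}|^2(0)=\int_{\T^d}\tfrac{|(\varrho_0\uu_0)^{\eps}|^2}{2\varrho_0^{\eps}}\to\int_{\T^d}\tfrac12\varrho_0|\uu_0|^2$ as $\eps\to0$, using $\varrho_0\ge c_1$. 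The same bounded–density argument gives $\int_{\T^d}\tfrac12\varrho^{\eps}|\uu_{\eps}|^2(t)\to\int_{\T^d}\tfrac12\varrho|\uu|^2(t)$ for every $t$. Collecting all limits in the integrated identity produces \eqref{conser1} for every $t\in(0,T)$.
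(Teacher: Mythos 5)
Your overall architecture coincides with the paper's: spatial-only mollification, the test function $(\vr^\eps)^{-1}(\vr\uu)^\eps$, the cancellation that collapses the time and convective contributions into the single flux commutator $R_\eps$ (this is exactly the paper's cancellation $(A2)+(B3)=0$ with the remainders $(B1),(B2)$ repackaged, and your algebra $\vr^\eps\uu_\eps=(\vr\uu)^\eps$ is correct), and the initial layer handled through the weak continuity \eqref{ini1}--\eqref{ini2} evaluated against the kernel. Where you genuinely diverge is the pressure term, and your route is sound but different: the paper splits $(p(\vr))^\eps=[(p(\vr))^\eps-p(\vr^\eps)]+p(\vr^\eps)$ and renormalizes the \emph{mollified} continuity equation exactly, producing $\int\pa_t\HH(\vr^\eps)$, so that the entire weight of \eqref{rhod=2}/\eqref{rhod=3} falls on the commutator $(p(\vr))^\eps-p(\vr^\eps)$, which is controlled by a second-order Taylor expansion in which density increments appear \emph{quadratically} (whence $L^2$ increments give $L^1$ smallness in $d=2$, and $L^{12/5}$ increments give $L^{6/5}$ in $d=3$). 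You instead keep $(p(\vr))^\eps$ intact, pair it against $\di\uu_\eps$ using a uniform $L^2$ bound on $\na\uu_\eps$ --- note this is strictly finer than the paper's Lemma \ref{lemma1}(iii), which only yields $\eps\|\na\uu_\eps\|\to0$, but your cancellation mechanism ($O(\eps)$ commutator against $O(\eps^{-1})$ gradient of $\vr^\eps$) is the right one and does give boundedness --- and then renormalize the \emph{unmollified} continuity equation \`a la DiPerna--Lions, which requires only $\vr\in L^\infty$ and $\uu\in L^2(0,T;H^1)$. If carried out (with one line added to identify $\lim_{\tau\to0}\int\HH(\vr(\tau))$ with $\int\HH(\vr_0)$ from \eqref{ini1}, e.g.\ via the mollified equation at fixed $\eps$), your argument would prove the theorem \emph{without} hypotheses \eqref{rhod=2}/\eqref{rhod=3}, i.e.\ a stronger statement than the paper's.

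The one genuine error in your write-up is the diagnosis of where \eqref{rhod=2}/\eqref{rhod=3} act. They play no role in $R_\eps$: in the paper, $(B1)$ and $(B2)$ are closed by Lemmas \ref{lemma1}(iii) and \ref{keyla} using only $\uu\in L^4\cap L^2(H^1)$ and $\vr\in L^\infty$; and in your scheme, the required convergence $N\to0$ strongly in $L^2(\T^d\times(0,T))$ follows from the bilinear boundedness $L^4\times L^4\to L^2$ of Constantin--E--Titi commutators together with density (equivalently, translation continuity of $\uu$ in $L^4$), with no rate in $\eps$ and no density regularity whatsoever. Conversely, the increment-balancing you sketch does not actually close as a proof of $L^2$ smallness: expanding $\delta_y(\vr\uu)$ produces the piece $|\delta_y\vr|\,|\uu|\,|\delta_y\uu|$, and in $d=3$ H\"older with $L^{12/5}\times L^4\times L^6$ gives exponent $\frac{5}{12}+\frac14+\frac16=\frac56>\frac12$, so this term cannot be placed in $L^2_x$ by the half-derivative bound on $\vr$; it is instead absorbed by $\|\delta_y\uu\|_{L^4}\to0$. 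So the dimension-dependent conditions are not ``captured precisely'' by $R_\eps$; in the paper they are needed exclusively for the pressure commutator, a term your DiPerna--Lions route bypasses entirely.
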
 

%{\color{red} (Comment about the additional term depending linearly on $\varrho$ in $\HH$ in \eqref{conser1}.)}

\begin{remark} %{\color{red} It should be modified. Discussion about the case $\nu$, $\mu$ are constants should be added.}
\blue{The conditions \eqref{rhod=2} and \eqref{rhod=3} can be obtained when $\vr$ belongs to the classical Besov spaces, more precisely $\vr \in L^\infty(0,T;B_2^{\frac{1}{2},\infty}(\T^2))$ when $d = 2$ and $\vr \in L^\infty(0,T;B_{\frac{12}{5}}^{\frac{1}{2},\infty}(\T^3))$ when $d = 3$, where $B_s^{r,\infty}(\T^d)$ is the classical Besov space for $s,r\in (0,\infty)$.}

\blue{It's worth to remark that the condition on $\vr$ depends on the dimension, but not on particular form of the pressure law.}
%\indent In general, the conditions \eqref{rhod=2} and \eqref{rhod=3} are satisfied if, for instance, $\vr\in L^\infty(0,T;B_{2}^{\frac{1}{2},\infty}(\T^2)\cap B_{p}^{1,\infty}(\T^2))$ and $\vr \in L^\infty(0,T;B_{\frac{12}{5}}^{\frac 12,\infty}(\T^3)\cap B_{p^{\frac 65}}^{1,\infty}(\T^3))$ respectively, where $B_\beta^{\alpha,\infty}(\T^d)$ is the classical Besov space for a constant $\beta > 0$ and $B_{p}^{\alpha,\infty}(\T^d)$ is the Besov-Orlicz space for a continuous, non-decreasing and convex function $p$ satisfying $p(0) = 0$ and $p(\infty) = \infty$.}
\end{remark}

\blue{
\begin{remark}
To prove the energy equality \eqref{conser1}, we only need the continuity of viscosity functions $\nu$ and $\mu$, since we only use the fact that $\nu(\vr)$ and $\mu(\vr)$ are bounded from above once $0< c_1 \leq \vr \leq c_2<\infty$. However, for the existence of solutions, in particular, to achieve the $L^2(0,T;H^1(\T^d))$ of the velocity, one might need to impose more conditions on either $\nu$ or $\mu$ to obtain such the regularity for $\uu$ from \eqref{conser1}. For instance, one can assume that $\nu$ is strictly increasing and $\nu(0) = 0$, which implies that $\nu(\vr) \geq \alpha_0 > 0$ once $\vr \geq c_1 > 0$.
\end{remark}}
\begin{remark}\label{class}
	If we have 
	\begin{equation}\label{classical}
	\uu \in L^s(0,T;L^q(\T^d)) \quad \text{ with } \quad \frac 1s + \frac 1q \leq \frac 12 \; \text{ and } \; q \geq 4
	\end{equation}
	 then by interpolation it follows
	\begin{equation*}
		\|\uu\|_{L^4(\T^d\times(0,T))} \leq C\|\uu\|_{L^\infty(0,T;L^2(\T^d))}^a\|\uu\|_{L^s(0,T;L^q(\T^d))}^{1-a}
	\end{equation*}
	for some $a\in (0,1)$. Therefore, the results of Theorem \ref{cNS-torus} (and the subsequent theorems) are also valid with the classical assumption \eqref{classical} on the velocity.
\end{remark}
Our results in Theorem \ref{cNS-torus} improve existing works in the literature, particularly in the following sense:
	\begin{itemize}
		\item \blue{We consider more general pressure laws as well as degenerate viscosities. This is possible thanks to the fact that vacuum is not presented.}
		\item In three dimensions, we require only $\uu \in L^4(\T^3\times(0,T))$, which is consistent with the classical conditions for homogeneous incompressible Navier-Stokes equations in e.g. \cite{Shi74} (See Remark \ref{class}). This improves for instance the results of  \cite{Yu17}, in which it was required that $\uu \in L^s(0,T;L^q(\T^3))$ with $1/s + 1/q \leq 5/12$ and $q\geq 6$, and only $\gamma$-pressure law, i.e. $p(\vr) = \vr^\gamma, \gamma>1$ and constant viscosities were considered.
		
		\medskip
		\item
		At first sight, conditions \eqref{rhod=2} and \eqref{rhod=3} seem stronger than what assumed in \cite[Theorem 1.1]{Yu17}. However, since \cite{Yu17} used the notion of weak solutions constructed in \cite{VY16} and \cite{LV18} which requires several regularity properties on the density $\vr$ and the velocity $\uu$. For instance, in \cite{Yu17}, the condition $\na\sqrt{\vr} \in L^\infty(0,T;L^2(\T^d))$ is imposed. This, in combination with $0< \underline \varrho \leq\vr(x,t) \leq \overline \varrho < \infty$, implies  that $\vr \in L^\infty(0,T;H^1(\T^d))$ which is clearly stronger than \eqref{rhod=2} or \eqref{rhod=3}.
	\end{itemize}
 
\medskip

Next we deal with the case where $\Omega$ is a bounded domain with $C^2$ boundary. 
For $\delta > 0$,  put
\begin{equation*}
\Omega_\delta: = \{x\in\Omega: d(x,\partial\Omega) > \delta \}.
\end{equation*} 
Since  $\Omega$  is a bounded, connected domain with $C^2$ boundary,  we find $r_0>0$ and a unique $C_b^1$-vector function $n:\Omega\backslash\Omega_{r_0}\to S^{d-1}$  such that the following holds true: for any $r\in [0,r_0)$, $x\in \Omega_{r}\backslash \Omega_{r_0}$ there exists  a unique $x_r\in\partial\Omega_{r}$  such that  $d(x,\partial \Omega_{r})=|x-x_r|$ and $n(x)$ is the outward  unit normal vector field to the boundary $\partial\Omega_r$ at $x_r$.  

The energy equality for \eqref{NS} in a domain with boundary is stated in the following theorem.

\begin{theorem}\label{cNS-domain}
	Let $\Omega$ be a bounded domain with $C^2$ boundary $\pa\Omega$ and $(\varrho,\uu)$ be a weak solution of \eqref{gNS} with initial data \eqref{initial} and Dirichlet boundary condition \eqref{boundary}. Assume that
	\begin{equation*}
		0<c_1\leq \vr(x,t) \leq c_2<\infty, \quad \text{ and } \quad \uu \in L^\infty(0,T;L^2(\Omega)) \cap L^2(0,T;H^1(\Omega)).
	\end{equation*}
	We assume in the case $d = 2$ that, for each $0 < \delta < 1$,
	\begin{equation}\label{domain-d2}
		\sup_{t\in(0,T)}\sup_{|h| \leq \delta} |h|^{-\frac 12}\|\varrho(\cdot+h,t)-\varrho(\cdot,t)\|_{L^{2}(\Omega_\delta)} <\infty, 
	\end{equation}
	and in case $d = 3$ that, for each $0 < \delta < 1$,
	\begin{equation}\label{domain-d3}
	\begin{gathered}
	\uu \in L^4(\Omega \times (0,T)) \quad \text{ and } \quad 
	\sup_{t\in(0,T)}\sup_{|h| \leq \delta} |h|^{-\frac 12}\|\varrho(\cdot+h,t)-\varrho(\cdot,t)\|_{L^{\frac{12}{5}}(\Omega_\delta)} <\infty.
	\end{gathered}
		\end{equation}
	Then the energy equality holds, i.e.\blue{
	\begin{equation}\label{eq_cNSd}
		\begin{aligned}
	\int_{\Omega}\left(\frac{1}{2}(\varrho|\uu|^2)(x,t)+\blue{\HH(\vr)(x,t)}\right)dx&+\int_{0}^{t}\int_{\Omega}\nu(\vr)\mathbb{D} \uu|^2dxds + \int_0^t\int_{\Omega}\mu(\vr)|\di \uu|^2dxds\\
	&= \int_{\Omega}\left(\frac{1}{2}\varrho_0|\uu_0|^2+\blue{\HH(\vr_0)}\right)dx \quad \forall t\in (0,T). 
		\end{aligned}                                     
	\end{equation}}
\end{theorem}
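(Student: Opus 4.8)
The plan is to adapt the spatial-mollification scheme of the torus case (Theorem \ref{cNS-torus}) to the bounded domain, the only genuinely new ingredient being the control of the terms generated near $\partial\Omega$. I would mollify the weak formulations \eqref{eqdef1}--\eqref{eqdef2} in the spatial variable only, producing identities valid pointwise in $x\in\Omega_\delta$ and weakly in $t$, and then test the mollified momentum balance against the regularized velocity $(\vr^\eps)^{-1}(\vr\uu)^\eps$ paired with a smooth temporal cutoff approximating $\mathbf 1_{(0,t)}$. Pairing the regularized continuity equation with $\HH'(\vr^\eps)$ (the appropriate regularization of $\HH'(\vr)$) and invoking the algebraic identity $\vr\HH'(\vr)-\HH(\vr)=p(\vr)$, which follows at once from the definition \eqref{GH}, reproduces the potential-energy term $\intO\HH(\vr)$ and its contribution $p(\vr)\di\psi$ in the momentum equation. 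Sending first $\eps\to0$ and then the cutoff to $\mathbf 1_{(0,t)}$, the identity \eqref{eq_cNSd} follows provided every commutator and boundary error term vanishes.

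On the fixed interior domain $\Omega_\delta$ the analysis is essentially identical to the torus case. The terms requiring care are the convective commutator arising from $(\vr\uu\otimes\uu)^\eps$ against the test function, together with the terms coupling $\vr^\eps$ to the velocity. Following the Constantin--E--Titi mechanism, each is bounded by a product of a spatial difference quotient of $\vr$ (or $\vr\uu$) with a difference quotient of $\uu$; the bound $0<c_1\le\vr\le c_2$ lets me treat $(\vr^\eps)^{-1}$ as a bounded factor, while the fractional regularity \eqref{domain-d2} in $d=2$ and \eqref{domain-d3} in $d=3$, together with $\uu\in L^2(0,T;H^1)$ and, when $d=3$, $\uu\in L^4$, supplies exactly the $|h|^{1/2}$ gain needed to offset the $|h|^{-1/2}$ loss from one spatial derivative. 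Hölder's inequality with the exponents dictated by the Sobolev embeddings ($H^1\hookrightarrow L^6$ when $d=3$, $L^4$ when $d=2$) closes these estimates, and each interior error tends to $0$ as $\eps\to0$ uniformly in $\delta$.

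The genuine difficulty is the boundary. The spatial mollification is not well defined within distance $\eps$ of $\partial\Omega$, so I would use the $C^1_b$ normal field $n:\Omega\setminus\Omega_{r_0}\to S^{d-1}$ constructed above to perform an inward-shifted mollification near the boundary, keeping the kernel support inside $\Omega$; this introduces an extra commutator supported in $\Omega\setminus\Omega_{r_0}$. The decisive observation is that the homogeneous Dirichlet condition $\uu=0$ on $\partial\Omega$, combined with $\uu\in L^2(0,T;H^1)$, forces $\uu$ to be quantitatively small in the boundary strip via the Hardy--Poincaré estimate $\int_{\Omega\setminus\Omega_\delta}|\uu|^2\,dx\le C\delta^2\int_{\Omega\setminus\Omega_\delta}|\na\uu|^2\,dx$, so the boundary contributions decay as $\delta\to0$ with no extra hypothesis on the behaviour of $\uu$ near $\partial\Omega$. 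The $C^1_b$ regularity of $n$ is what keeps the shift operator bounded on the relevant spaces and lets me commute it with the mollifier up to lower-order, controllable remainders; correspondingly $(\vr\uu)^\eps$, and hence the test function, is small near $\partial\Omega$. Verifying that all these boundary terms vanish in the iterated limit ($\eps\to0$, then $\delta\to0$) is where the bulk of the work lies, and I expect it to be the main obstacle.

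Finally, the temporal endpoints are handled by the weak continuity properties (iii)--(iv) of Definition \ref{solution}: the density bounds and the strong control furnished by the energy estimate upgrade the distributional convergences \eqref{ini1}--\eqref{ini2} to convergence of $\intO\frac12\vr|\uu|^2$ and $\intO\HH(\vr)$ as $t\to0^+$, so the regularized energy identity passes to \eqref{eq_cNSd} in the limit. The interior commutator estimates are routine once the torus case is in hand; the delicate point, and the heart of the argument, is the boundary analysis via the normal field $n$ and the Hardy inequality.
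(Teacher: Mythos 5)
Your strategy coincides with the paper's in its essentials: spatial-only mollification, the test function $(\vre)^{-1}(\vr\uu)^\eps$, Constantin--E--Titi commutator estimates in the interior fed by \eqref{domain-d2}/\eqref{domain-d3} (with $\uu\in L^4$ in $d=3$), the potential-energy bookkeeping through $\HH$, and, crucially, the Hardy--Poincar\'e estimate $\|\uu\|_{L^2((\Omega\setminus\Omega_\delta)\times(0,T))}\le C\delta\|\na\uu\|_{L^2((\Omega\setminus\Omega_{2\delta})\times(0,T))}$ exploiting $\uu=0$ on $\pa\Omega$, which is exactly the paper's Lemma \ref{Poincare} and the reason no boundary-layer hypothesis is needed. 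Where you genuinely diverge is the implementation near $\pa\Omega$: you propose an inward-shifted, boundary-adapted mollifier so that the regularized equations hold on all of $\Omega$, whereas the paper never modifies the mollifier. Instead it works on the interior domains $\Omega_{\eps_2}$ (where the standard mollification is defined for $\eps<\eps_2$), integrates by parts keeping the surface integrals over $\pa\Omega_{\eps_2}$ explicit, averages in $\eps_2\in(\eps_1,\eps_1+\eps_3)$, and uses the coarea formula \eqref{coarea} to convert every surface term into a volume integral over the thin strip $\Omega_{\eps_1}\setminus\Omega_{\eps_1+\eps_3}$, which Lemma \ref{Poincare} then kills in the iterated limit $\tau\to0$, then $\eps,\eps_1\to0$, then $\eps_3\to0$. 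The paper's averaging device buys freedom from all shift--mollifier commutators; your route, if completed, would yield a single global identity on $\Omega$, but at the cost of precisely the verification you defer.

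Two soft spots deserve flagging. First, your claim that the interior commutator errors vanish as $\eps\to0$ \emph{uniformly in $\delta$} is unjustified: the suprema in \eqref{domain-d2}--\eqref{domain-d3} are finite for each fixed $\delta$ with constants that may blow up as $\delta\to0$, so the limits must be taken in the iterated order ($\eps\to0$ first at fixed $\delta$, then $\delta\to0$), which is how the paper orders $\eps,\eps_1,\eps_3$; fortunately the iterated structure is all that is needed. Second, and more consequential for your shifted-mollifier scheme: the density regularity is assumed \emph{only on the interior sets} $\Omega_\delta$, so in the boundary strip the pressure commutator $(p(\vr))^\eps-p(\vre)$ cannot be closed by the fractional gain at all; there you must rely solely on the uniform bounds $c_1\le\vr\le c_2$ together with the Hardy smallness of the velocity (and of your mollified test function), mirroring the paper's treatment of its boundary terms in the estimate of $(H)$, where the pressure contributions near $\pa\Omega_{\eps_2}$ are bounded by $C\eps_3^{-1}\int_0^T\int_{\Omega\setminus\Omega_{\eps_3}}|\uu|\,dx\,ds\lesssim \eps_3^{1/2}\|\na\uu\|_{L^2((\Omega\setminus\Omega_{2\eps_3})\times(0,T))}$. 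Your sketch gestures at this mechanism but, by your own admission, leaves it unverified, and it is exactly where your additional shift commutators (controlled via the $C^1_b$ bound on the normal field, as in \eqref{z100}) would also land. Since the paper's computations show these strip estimates do close, the omission is one of execution rather than of concept, but as written the proposal stops short of proving the part it correctly identifies as the heart of the matter.
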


\begin{remark} In contrast to Euler equations, we do not need any additional regularities near the boundary to establish energy equality for Navier-Stokes equations in bounded domains. This is due to a careful use of homogeneous Dirichlet boundary conditions and the assumption $\uu \in L^2(0,T;H^1(\Omega))$ (see Lemma \ref{Poincare}).
%We note that the condition $\uu \in L^2(0,T,H^1(\Omega))$ is crucial to deal with the boundary effects, and hence in the statement of Theorem \ref{cNS-domain} no additional regularity hypothesis on $\uu$ near the boundary is needed. 
\end{remark}

As mentioned above, our method of proof is also suitable to obtain energy equalities for the inhomogeneous incompressible Navier-Stokes equation \eqref{icNS}.
%\begin{theorem}\label{cNS-normal-both}
%	Let either $\Omega = \T^d$ or $\Omega \subset \R^d$ be a bounded domain with $C^2$ boundary $\partial\Omega$ where $d=2,3$. Let $(\vr,\uu)$ be a weak solution to \eqref{cNS-normal} with initial data \eqref{initial} (and with Dirichlet boundary condition \eqref{boundary} in case $\Omega$ is a bounded domain). Assume  
%	\begin{equation*}
%		0 < c_1 \leq \vr(x,t) \leq c_2 < +\infty \quad \text{ and } \quad \uu \in L^\infty(0,T;L^2(\Omega))\cap L^2(0,T;H^1(\Omega)).
%	\end{equation*}
%	Moreover, if $\Omega=\T^d$ we assume \eqref{rhod=2} for $d=2$ and \eqref{rhod=3} for $d=3$, and if $\Omega$ is a bounded domain we assume \eqref{domain-d2} for $d=2$ and \eqref{domain-d3} for $d=3$. 
%	
%	Then the energy equality holds, i.e.
%	\begin{equation*}
%		\begin{aligned}
%	\int_{\Omega}\left(\frac{1}{2}(\varrho|\uu|^2)(x,t)+\blue{\HH(\vr)(x,t)}\right)dx&+2\nu\int_{0}^{t}\int_{\Omega}|\na \uu|^2dxdt + \lambda\int_0^t\int_{\Omega}|\di \uu|^2dxds\\
%	&= \int_{\Omega}\left(\frac{1}{2}\varrho_0|\uu_0|^2+\blue{\HH(\vr_0)}\right)dx \quad \forall t\in (0,T). 
%		\end{aligned}                                     
%	\end{equation*}	
%\end{theorem}
\begin{theorem}\label{icNS-both}
	Let either $\Omega = \T^d$ or $\Omega \subset \R^d$ be a bounded domain with $C^2$ boundary $\partial\Omega$ where $d=2,3$. Let $(\vr,\uu,P)$ be a weak solution to \eqref{icNS} with initial data \eqref{initial} (and with Dirichlet boundary condition \eqref{boundary} in case $\Omega$ is a bounded domain). Assume
	\begin{equation*}
		0 < c_1 \leq \vr(x,t) \leq c_2 < +\infty, \quad \uu \in L^\infty(0,T;L^2(\Omega))\cap L^2(0,T;H^1(\Omega)), \quad\text{and} \quad P\in L^2(\Omega \times (0,T)).
	\end{equation*}
%	In the case of bounded domain, we assume additionally
%	\begin{equation}\label{layer1}
%		\limsup_{\delta\to 0}\int_0^T\fint_{\Omega\backslash\Omega_\delta}\left(|\na \uu|^{\frac 32} + |P|^{\frac 32}\right)dxds < +\infty.
%	\end{equation}
	Moreover, in the case $d = 3$ we assume additionally $\uu \in L^4(\Omega\times (0,T))$. Then the energy equality holds, i.e.
	\blue{\begin{equation*}
		\frac 12 \intO \vr(x,t)|\uu(x,t)|^2dx + \int_0^t\intO\nu(\vr)|\mathbb D\uu|^2dxds = \frac 12\intO \vr_0(x)|\uu_0(x)|^2dx \quad \forall\, t\in (0,T).
	\end{equation*}}
\end{theorem}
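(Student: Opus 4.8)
The plan is to follow the same unified regularization strategy as in Theorem \ref{cNS-torus} and Theorem \ref{cNS-domain}, now applied to the inhomogeneous incompressible system \eqref{icNS}. First I would mollify the momentum equation in the spatial variables only and test against $(\varrho^\varepsilon)^{-1}(\varrho\uu)^\varepsilon$, where $(\cdot)^\varepsilon$ denotes spatial convolution. Since $0<c_1\le\varrho\le c_2<\infty$, the quantity $(\varrho^\varepsilon)^{-1}$ is well defined and bounded, and the test function is admissible because $\uu\in L^2(0,T;H^1(\Omega))$ guarantees enough regularity after mollification; in the bounded-domain case I would rely on Lemma \ref{Poincare} so that the homogeneous Dirichlet condition makes the boundary contributions vanish without any extra near-boundary hypothesis on $\uu$.

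The key simplification relative to \eqref{gNS} is that the incompressibility constraint $\nabla\cdot\uu=0$ removes the pressure potential term entirely. Indeed, the pressure contribution reduces to $\int\nabla P\cdot(\varrho^\varepsilon)^{-1}(\varrho\uu)^\varepsilon\,dx$, and after integration by parts this pairs $P$ against $\nabla\cdot\bigl[(\varrho^\varepsilon)^{-1}(\varrho\uu)^\varepsilon\bigr]$. Because $\nabla\cdot\uu=0$, the leading-order part of this divergence cancels in the limit, leaving only commutator-type remainders that tend to zero; the assumption $P\in L^2(\Omega\times(0,T))$ is exactly what is needed to control this pairing. Consequently no pressure potential $\HH$ appears in the final energy identity, which matches the stated conclusion. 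The viscous term is handled precisely as before: the continuity of $\nu$ together with $c_1\le\varrho\le c_2$ bounds $\nu(\varrho)$, and mollification commutes with $\mathbb{D}\uu$ up to errors vanishing in $L^2$.

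The main analytical obstacle, as in the compressible case, is controlling the commutator between the convection term $\varrho\uu\otimes\uu$ and the mollifier. Passing to the limit $\varepsilon\to0$ requires showing that the difference between $\di(\varrho\uu\otimes\uu)^\varepsilon$ tested against $(\varrho^\varepsilon)^{-1}(\varrho\uu)^\varepsilon$ and the corresponding unmollified expression vanishes. This is where the dimension-dependent integrability enters: in $d=2$ the bounds $\uu\in L^\infty_tL^2_x\cap L^2_tH^1_x$ together with the Ladyzhenskaya-type interpolation suffice, whereas in $d=3$ the extra hypothesis $\uu\in L^4(\Omega\times(0,T))$ is needed so that $\varrho\uu\otimes\uu\in L^2$ and the commutator estimate closes. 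I would reuse the commutator lemmas already established for \eqref{gNS}, since the structure of the convective nonlinearity is identical; the density modulus-of-continuity conditions \eqref{rhod=2}--\eqref{rhod=3} (resp. \eqref{domain-d2}--\eqref{domain-d3}) are not needed here because, under incompressibility, the relevant commutators only involve $\varrho$ through the bounded factor $(\varrho^\varepsilon)^{-1}$ rather than through spatial differences of $\varrho$.

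Finally I would recover the initial data by combining the weak continuity \eqref{ini1}--\eqref{ini2} with the energy bound to upgrade to strong continuity of $\sqrt{\varrho}\uu$ at $t=0$, exactly as in the proofs of the preceding theorems, and then let $\varepsilon\to0$ in the regularized identity to obtain the stated equality for all $t\in(0,T)$. The torus and bounded-domain cases are treated simultaneously, the only difference being the justification of the vanishing boundary terms via Lemma \ref{Poincare} in the latter.
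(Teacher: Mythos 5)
Your proposal follows essentially the same route as the paper's proof: mollify in space only, test with $(\varrho^\varepsilon)^{-1}(\varrho\uu)^\varepsilon$, reuse the commutator Lemmas \ref{lemma1} and \ref{keyla} for the convective and viscous terms (with $L^4$ integrability of $\uu$ supplied by Lemma \ref{GN} in $d=2$ and by assumption in $d=3$, and with no Besov-type conditions on $\varrho$ needed since there is no pressure-law term), and split the pressure contribution as $\nabla P^\varepsilon\cdot\uu^\varepsilon$ --- vanishing by $\di\uu^\varepsilon=0$, up to a boundary term handled via the coarea formula and Lemma \ref{Poincare} --- plus $\nabla P^\varepsilon\cdot\frac{(\varrho\uu)^\varepsilon-\varrho^\varepsilon\uu^\varepsilon}{\varrho^\varepsilon}$, controlled by $\|\nabla P^\varepsilon\|_{L^2}\leq C\varepsilon^{-1}\|P\|_{L^2}$ together with the $o(\varepsilon)$ commutator estimate; this is exactly the paper's $(J1)+(J2)^{\mathrm{bdr}}$ decomposition. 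One correction to your last paragraph: the initial data are \emph{not} recovered by upgrading to strong continuity of $\sqrt{\varrho}\uu$ at $t=0$ --- Definition \ref{solution} provides only the weak continuity \eqref{ini1}--\eqref{ini2}, and avoiding such an upgrade is precisely the point of the test function $(\varrho^\varepsilon)^{-1}(\varrho\uu)^\varepsilon$; instead, for fixed $\varepsilon$ one lets $\tau\to 0$ in the regularized identity, where $(\varrho\uu)^\varepsilon(\cdot,\tau)$ and $\varrho^\varepsilon(\cdot,\tau)$ converge pointwise by \eqref{ini1}--\eqref{ini2} applied to the mollifier as test function, and only afterwards sends $\varepsilon\to 0$.
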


\medskip
We emphasize again that, in Theorem \ref{icNS-both}, the condition $\uu \in L^2(0,T;H^1(\Omega))$ helps to handle the boundary effects without requiring extra conditions of $\uu$ near the boundary. As a consequence, we improve the results of \cite{Yu17-2} for homogeneous incompressible Navier-Stokes equations by removing the assumption $\uu \in L^s(0,T; B_s^{\alpha,\infty}(\Omega))$ with $\frac 12 + \frac 1s < \alpha < 1$.
\begin{theorem}\label{thm:org_NS}
	Let $\Omega \subset \mathbb R^3$ be a bounded domain with $C^2$ boundary and $(\uu, P)$ be a weak solution to the homogeneous incompressible Navier-Stokes equation
	\begin{equation}\label{org_NS}
	\begin{cases}
		\pa_t \uu - \mu \Delta \uu+ \di(\uu\otimes\uu) + \na P = 0, &\text{ in } \Omega\times(0,T),\\
		\di \uu = 0, &\text{ in } \Omega\times (0,T),\\
		\uu = 0, &\text{ on } \pa\Omega\times (0,T),\\
		\uu(x,0) = \uu_0(x), &\text{ in } \Omega,
	\end{cases}
	\end{equation}
	where $\mu > 0$ is the viscosity. Assume $\uu \in L^4(\Omega\times(0,T))$ and there exists $\delta_0>0$ such that 
	\begin{equation}\label{pressure}
		P\in L^2(\Omega\backslash\Omega_{\delta_0}\times(0,T)).
	\end{equation}

Then the energy equality holds
	\begin{equation*}
		\int_{\Omega}|\uu(x,t)|^2dx + \mu\int_0^t\int_{\Omega}|\na \uu(x,s)|^2dxds = \int_{\Omega}|\uu_0(x)|^2dx \quad \forall\, t\in (0,T).
	\end{equation*}
\end{theorem}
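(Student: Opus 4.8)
The plan is to derive Theorem~\ref{thm:org_NS} as the homogeneous, constant–density specialization of Theorem~\ref{icNS-both}. Setting $\vr\equiv 1$ turns the continuity equation into $\di\uu=0$, which holds by hypothesis, so that $(\vr,\uu,P)=(1,\uu,P)$ is a candidate weak solution of \eqref{icNS} with a constant viscosity $\nu\equiv 2\mu$ (the factor $2$ accounts for $\di(\mathbb D\uu)=\tfrac12\Delta\uu$ when $\di\uu=0$, so that $-\di(\nu\,\mathbb D\uu)=-\mu\Delta\uu$). The density bounds $0<c_1\le\vr\le c_2$ and the oscillation conditions of Theorem~\ref{icNS-both} are trivially met with $c_1=c_2=1$, the regularity $\uu\in L^\infty(0,T;L^2(\Omega))\cap L^2(0,T;H^1(\Omega))$ comes with a Leray--Hopf solution, and $\uu\in L^4(\Omega\times(0,T))$ is assumed. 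Thus the only hypothesis of Theorem~\ref{icNS-both} not immediately available is the \emph{global} pressure integrability $P\in L^2(\Omega\times(0,T))$: by \eqref{pressure} we only control $P$ in the boundary layer $\Omega\setminus\Omega_{\delta_0}$.

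The core of the argument is therefore to upgrade the pressure regularity in the interior $\Omega_{\delta_0}$, for free, from $\uu\in L^4$. First I would take the divergence of the momentum equation in \eqref{org_NS}; using $\di\uu=0$ this removes the time–derivative and the Laplacian terms and yields, for a.e.\ $t$, the Poisson equation $-\Delta P=\di\di(\uu\otimes\uu)$ in $\mathcal D'(\Omega)$. Since $\uu\in L^4(\Omega\times(0,T))$ we have $\uu\otimes\uu\in L^2(\Omega\times(0,T))$, so the right–hand side is a second distributional derivative of an $L^2$ field. Interior elliptic ($L^2$ Calder\'on--Zygmund) estimates for this equation, applied on a slightly larger region $\Omega_{\delta_0/2}$ with $\overline{\Omega_{\delta_0}}\subset\Omega_{\delta_0/2}\subset\subset\Omega$ and localized by a cutoff, then give $P\in L^2(\Omega_{\delta_0}\times(0,T))$ with a bound controlled by $\|\uu\otimes\uu\|_{L^2}$ (the additive/harmonic normalization of $P$ being pinned down by its already–known values on the layer $\Omega\setminus\Omega_{\delta_0}$). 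Combining this interior estimate with the hypothesis \eqref{pressure} on the complementary strip gives $P\in L^2(\Omega\times(0,T))$.

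With all the hypotheses of Theorem~\ref{icNS-both} verified, its conclusion gives $\tfrac12\int_\Omega|\uu(t)|^2+\int_0^t\!\int_\Omega \nu|\mathbb D\uu|^2=\tfrac12\int_\Omega|\uu_0|^2$. It remains to rewrite the dissipation: for divergence–free vector fields vanishing on $\pa\Omega$ one has the Korn–type identity $\int_\Omega|\mathbb D\uu|^2\,dx=\tfrac12\int_\Omega|\na\uu|^2\,dx$, proved for smooth compactly supported fields by integration by parts (using $\int_\Omega\pa_i u_j\,\pa_j u_i=\int_\Omega(\di\uu)^2=0$) and extended to $\uu\in L^2(0,T;H^1_0(\Omega))$ by density. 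Substituting this identity together with the value $\nu\equiv 2\mu$, a direct accounting of the constants converts the equality above into the stated energy equality for \eqref{org_NS}.

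I expect the pressure step to be the only genuine obstacle. The algebraic reduction and the Korn identity are routine; the subtle point is that Theorem~\ref{icNS-both} demands $P\in L^2$ on all of $\Omega$, whereas the pressure is only well behaved in the interior unless extra boundary information is supplied---precisely what \eqref{pressure} provides. Care is needed to make the interior elliptic estimate rigorous at the level of weak solutions: one must ensure $P$ is a priori locally integrable (so that the Poisson equation holds distributionally) and fix its additive normalization consistently with the boundary–layer data, so that the two regions glue to a single function in $L^2(\Omega\times(0,T))$.
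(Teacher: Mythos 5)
Your proposal is correct, but it takes a genuinely different route from the paper's. The paper never upgrades the pressure: it simply re-runs the proof of Theorem \ref{icNS-both} with $\vr\equiv 1$ and observes that the interior pressure term $(J1)$, which involves $\na P^\eps\,\bigl[(\vr\uu)^\eps-\vr^\eps\uu^\eps\bigr]/\vr^\eps$, vanishes identically (since $(\vr\uu)^\eps=\vr^\eps\uu^\eps=\uu^\eps$ when $\vr\equiv1$), so the pressure survives only in the boundary term $(J2)^{\mathrm{bdr}}$, bounded by $C\|P\|_{L^2((\Omega\setminus\Omega_{\eps_3})\times(0,T))}\|\na\uu\|_{L^2((\Omega\setminus\Omega_{2\eps_3})\times(0,T))}$, which requires exactly the boundary-layer integrability \eqref{pressure} and nothing more. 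You instead keep Theorem \ref{icNS-both} as a black box and close the gap by interior elliptic regularity: $P\in L^1_{loc}$ from the weak formulation, the Poisson equation $-\Delta P=\di\di(\uu\otimes\uu)$ obtained by testing with gradients $\psi=\na\varphi$ (the $\pa_t$ and viscous terms drop via $\di\uu=0$), and a cutoff $\chi$ with $\supp\na\chi\subset\Omega\setminus\Omega_{\delta_0}$, where \eqref{pressure} controls the commutator terms, giving $\|P(t)\|_{L^2(\Omega_{\delta_0})}\leq C\bigl(\|\uu(t)\|_{L^4(\Omega)}^2+\|P(t)\|_{L^2(\Omega\setminus\Omega_{\delta_0})}\bigr)$ by Calder\'on--Zygmund and Riesz-potential bounds; your worry about normalization is in fact vacuous, since the localized function $\chi P$ is a compactly supported solution of a Poisson equation and is recovered directly from the fundamental solution, so the actual $P$ on the annulus pins everything down. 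What each approach buys: yours shows that \eqref{pressure} plus $\uu\in L^4(\Omega\times(0,T))$ already imply the global hypothesis $P\in L^2(\Omega\times(0,T))$ of Theorem \ref{icNS-both} (in the spirit of the paper's remark following the theorem statement), making Theorem \ref{thm:org_NS} a literal corollary, at the cost of elliptic machinery and an a.e.-in-time disintegration of the Poisson equation; the paper's argument stays entirely inside its mollification framework and needs no elliptic theory. Two minor points: your reduction $\nu\equiv 2\mu$ and the Korn identity $\int_\Omega|\mathbb{D}\uu|^2\,dx=\tfrac12\int_\Omega|\na\uu|^2\,dx$ for divergence-free $\uu\in H_0^1(\Omega)$ are correct, and they yield $\int_\Omega|\uu(t)|^2\,dx+2\mu\int_0^t\int_\Omega|\na\uu|^2\,dx\,ds=\int_\Omega|\uu_0|^2\,dx$, whose factor $2\mu$ is the correct one (the displayed equality in the theorem carries a typographical factor); and the energy-class regularity $\uu\in L^\infty(0,T;L^2(\Omega))\cap L^2(0,T;H^1(\Omega))$ you import from the Leray--Hopf class is implicitly used by the paper as well, so that is not a discrepancy.
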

\begin{remark}
	The assumption \eqref{pressure} is to deal with the boundary layer. We remark that \cite{Yu17-2} did not impose any condition on the pressure, but the author used $P = 0$ on the boundary (see \cite[Proposition 2.3]{Yu17-2}) which is neither assumed nor implied from \eqref{org_NS}. Nevertheless, if somehow $P = 0$ on the boundary (in a weak sense), from the equation $-\Delta P = \sum_{i,j=1}^{d}\pa_{x_i}\pa_{x_j}(\uu_i\uu_j)$ we obtain $\|P\|_{L^2(\Omega\times(0,T))} \leq C\|\uu\|_{L^4(\Omega\times(0,T))}^2$ and therefore \eqref{pressure} is automatically satisfied.
\end{remark}

\begin{remark}
	After the completion of this work, we came to know the recent preprint of Chen, Liang, Wang and Xu \cite{CLWX} where the authors also use the concept of weak solutions defined in Definition \ref{solution}. By using a different approach, they obtain therein the energy equality for \eqref{cNS-normal} under the conditions, among others, on velocity $\uu \in L^s(0,T;L^q(\Omega))$ for $s \geq 4$ and $q\geq 6$, and on density $\nabla \sqrt\varrho \in L^{\infty}(0,T;L^2(\Omega))$ which are stronger than our assumptions in Theorem \ref{cNS-torus}. Nevertheless, they are able to deal with the case of vacuum, i.e. $\varrho$ is only assumed to be nonnegative instead of bounded below by a strict positive constant.
\end{remark}
\medskip
{\bf The organization of this paper} is as follows: In the next section, we prove some auxiliary estimates which will play important roles in our proof. The proofs of Theorems \ref{cNS-torus}, \ref{cNS-domain}, \ref{icNS-both}, and \ref{thm:org_NS} are presented in the last four sections respectively. \medskip

{\bf Notation.} Throughout the paper, $C$ denotes generic constants which may depend on $d$, $T$, $\|\varrho\|_{L^\infty(\Omega \times (0,T))}, \|\varrho^{-1}\|_{L^\infty(\Omega \times (0,T))}$ and other scalar parameters. We use the notation $\|f(s)\|_{L^p(\Omega)}$ to denote $\| f(\cdot,s)\|_{L^p(\Omega)}$.

For any Borel set $E$, we denote by $\fint_E f(x)dx = \frac{1}{\mathcal L^d(E)}\int_{E}f(x)dx$ the average of $f$ over $E$, where $\mathcal L^d(E)$ is the Lebesgue measure of $E$.

\section{Preliminaries}

Let $\omega: \mathbb R^d \to \mathbb R$ be a standard mollifier, i.e. $\omega(x) = c_0e^{-\frac{1}{1-|x|^2}}$ for $|x| < 1$ and $\omega(x) = 0$ for $|x| \geq 1$, where $c_0$ is a constant such that $\int_{\R^d}\omega(x)dx = 1$. For any $\eps > 0$, we define the rescaled mollifier $\omega_\varepsilon(x) = \eps^{-d}\omega(\frac x\eps)$. For any function $f \in L_{loc}^1(\Omega)$, its mollified version is defined as
\begin{equation*}
	f^\eps(x) = (f\star \omega_\eps)(x) = \int_{\mathbb R^d}f(x-y)\omega_\eps(y)dy, \quad x \in \Omega_\varepsilon,
\end{equation*}
recalling
$\Omega_\varepsilon = \{x\in\Omega: d(x,\partial\Omega) > \varepsilon \}$. 
\begin{lemma} \label{lemma1} Let $2 \leq d \in {\mathbb N}$, $1 \leq p,q \leq \infty$ and $f: \mathbb{T}^d\times(0,T) \to {\mathbb R}$.  
\begin{itemize}
\item[(i)] Assume $f \in L^p(0,T;L^q(\T^d))$. Then for any $\varepsilon> 0$, there holds
\begin{equation} \label{o1-1}
\| f^\varepsilon  \|_{L^p(0,T;L^\infty(\T^d))} \leq C\varepsilon^{- \frac{d}{q}}\| f \|_{L^p(0,T;L^q(\T^d))},
\end{equation}
\begin{equation} \label{o1-2}
\|\nabla  f^\varepsilon  \|_{L^p(0,T;L^\infty(\T^d))} \leq C\varepsilon^{-1- \frac{d}{q}}\| f \|_{L^p(0,T;L^q(\T^d))}.
\end{equation}

\item[(ii)] Assume $f \in L^p(0,T;L^q(\T^d))$. Then for any $\varepsilon> 0$, there holds
	\begin{equation} \label{o2}
	\|\nabla f^\varepsilon\|_{L^p(0,T;L^q(\mathbb{T}^d))}\leq C \varepsilon^{-1}  \|f\|_{L^p(0,T;L^q(\mathbb{T}^d))}.
	\end{equation}
Moreover, if $p,q<\infty$ then
\begin{equation} \label{lim2}
\limsup_{\varepsilon \to 0} \varepsilon\| \nabla f^\varepsilon \|_{L^p(0,T;L^q(\T^d))} = 0.
\end{equation}	
\item[(iii)]	 Assume $f \in L^p(0,T;L^q(\mathbb{T}^d))$ and $g: \T^d\times(0,T)\to\R$ with $0<c_1\leq g\leq c_2<\infty$. Then for any $\varepsilon> 0$, there holds
	\begin{equation}\label{z1}
	\left\|\nabla \frac{f^\varepsilon}{g^\varepsilon}\right\|_{L^p(0,T;L^q(\mathbb{T}^d))}\leq C(c_1,c_2) \varepsilon^{-1}  \|f\|_{L^p(0,T;L^q(\Omega))}.
	\end{equation}
	Moreover, if $p,q<\infty$ then
	\begin{equation}\label{z1'}
\limsup_{\eps \to 0}\varepsilon	\left\|\nabla \frac{f^\varepsilon}{g^\varepsilon}\right\|_{L^p(0,T;L^q(\mathbb{T}^d))}=0.
	\end{equation}
\item[(iv)] Assume $f \in L^2(0,T;H^1(\T^2))$. Then for any $\varepsilon> 0$, there holds
\begin{equation} \label{o3}
\| \nabla f^\varepsilon \|_{L^2(0,T;L^\infty(\T^2))} \leq C \varepsilon^{-1}\| f \|_{L^2(0,T;H^1(\T^2))}.
\end{equation}
Moreover, for any $r\in [1,2]$, there holds
\begin{equation} \label{lim3} 
\limsup_{\varepsilon \to 0} \varepsilon \| \nabla f^\varepsilon \|_{L^r(0,T;L^\infty(\T^2))} = 0.
\end{equation}
\end{itemize}

\end{lemma}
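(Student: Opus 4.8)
The plan is to treat the time variable as a parameter throughout, to prove each estimate pointwise in $t$ and then integrate, and to reduce every quantitative bound to Young's convolution inequality together with the scaling identities $\|\omega_\varepsilon\|_{L^s}=\varepsilon^{-d(1-1/s)}\|\omega\|_{L^s}$ and $\|\nabla\omega_\varepsilon\|_{L^s}=\varepsilon^{-1-d(1-1/s)}\|\nabla\omega\|_{L^s}$. Using $f^\varepsilon=f\star\omega_\varepsilon$ and $\nabla f^\varepsilon=f\star\nabla\omega_\varepsilon=(\nabla f)\star\omega_\varepsilon$ (the latter whenever $f$ has a weak gradient), for part (i) I would apply Young's inequality with the kernel placed in $L^{q'}$ to land in $L^\infty$, producing the factors $\varepsilon^{-d/q}$ in \eqref{o1-1} and $\varepsilon^{-1-d/q}$ in \eqref{o1-2}; for part (ii) I would place $\nabla\omega_\varepsilon$ in $L^1$ to get $\varepsilon^{-1}$ in \eqref{o2}; and for \eqref{o3} in part (iv) I would simply invoke part (i) with $q=2$, $d=2$ applied to $\nabla f\in L^2$, since $\nabla f^\varepsilon=(\nabla f)^\varepsilon$.

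The heart of the lemma is the collection of vanishing statements \eqref{lim2}, \eqref{z1'}, \eqref{lim3}, because the uniform bounds only show that $\varepsilon$ times the relevant norm is $O(1)$; genuine decay must come from a cancellation. The key structural fact is that $\varepsilon\nabla\omega_\varepsilon=\psi_\varepsilon$ with $\psi:=\nabla\omega$ satisfying $\int_{\mathbb{R}^d}\psi=0$ (because $\omega$ is compactly supported), so that $\varepsilon\nabla f^\varepsilon=f\star\psi_\varepsilon$. I would isolate the elementary fact that for any $\psi\in L^1(\mathbb{R}^d)$ with $\int\psi=0$ and any $h\in L^r(\mathbb{T}^d)$ with $r<\infty$ one has $\|h\star\psi_\varepsilon\|_{L^r}\to0$: this follows by writing $h\star\psi_\varepsilon(x)=\int_{\mathbb{R}^d}(h(x-\varepsilon z)-h(x))\psi(z)\,dz$, using $L^r$-continuity of translations on a dense class together with the uniform bound $\|h\star\psi_\varepsilon\|_{L^r}\le\|h\|_{L^r}\|\psi\|_{L^1}$. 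Applied with $h=f(\cdot,t)$ this gives $\varepsilon\nabla f^\varepsilon(\cdot,t)\to0$ in $L^q$ for a.e. $t$, and \eqref{lim2} follows by dominated convergence in time, the dominating function $\|\psi\|_{L^1}^p\|f(\cdot,t)\|_{L^q}^p$ lying in $L^1(0,T)$.

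For part (iii) I would expand $\nabla\tfrac{f^\varepsilon}{g^\varepsilon}=\tfrac{\nabla f^\varepsilon}{g^\varepsilon}-\tfrac{f^\varepsilon\nabla g^\varepsilon}{(g^\varepsilon)^2}$ and use that mollification preserves $c_1\le g^\varepsilon\le c_2$; the first term is bounded by \eqref{o2} and the second by $\tfrac{1}{c_1^2}\|f^\varepsilon\|_{L^q}\|\nabla g^\varepsilon\|_{L^\infty}\le C(c_1,c_2)\varepsilon^{-1}\|f\|_{L^q}$, giving \eqref{z1}. The statement \eqref{z1'} is more delicate, since $g$ is only bounded with no continuity and $\varepsilon\nabla g^\varepsilon$ need not vanish in $L^\infty$. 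My plan is to exploit that $f\mapsto\varepsilon\nabla\tfrac{f^\varepsilon}{g^\varepsilon}$ is linear and, by \eqref{z1}, uniformly (in $\varepsilon$) bounded on $L^p(0,T;L^q)$, so by density it suffices to prove \eqref{z1'} for smooth $f$. For such $f$ the term $\varepsilon\tfrac{\nabla f^\varepsilon}{g^\varepsilon}$ vanishes because $\nabla f^\varepsilon=(\nabla f)^\varepsilon$ is bounded, while in $\varepsilon\tfrac{f^\varepsilon\nabla g^\varepsilon}{(g^\varepsilon)^2}$ I may now place $f^\varepsilon$ in $L^\infty$ and use $\varepsilon\nabla g^\varepsilon=g(\cdot,t)\star\psi_\varepsilon\to0$ in $L^q$; here the compactness of $\mathbb{T}^d$ is essential, since it guarantees $g(\cdot,t)\in L^\infty\subset L^q$ so that the mean-zero fact of the previous paragraph applies.

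Finally, \eqref{lim3} is the other genuinely delicate point, because \eqref{o3} yields only $\varepsilon\|\nabla f^\varepsilon(\cdot,t)\|_{L^\infty}\le\|\omega\|_{L^2}\|\nabla f(\cdot,t)\|_{L^2}$, which is merely bounded. I would sharpen this with a localized estimate: since $\omega_\varepsilon$ is supported in $B_\varepsilon$, Cauchy–Schwarz gives $\|(\nabla f)^\varepsilon(\cdot,t)\|_{L^\infty}\le\|\omega_\varepsilon\|_{L^2}\sup_x\|\nabla f(\cdot,t)\|_{L^2(B_\varepsilon(x))}$, whence $\varepsilon\|\nabla f^\varepsilon(\cdot,t)\|_{L^\infty}\le\|\omega\|_{L^2}\sup_x\|\nabla f(\cdot,t)\|_{L^2(B_\varepsilon(x))}$ for $d=2$. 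The right-hand side tends to $0$ by absolute continuity of the integral of $|\nabla f(\cdot,t)|^2$, uniformly in the center $x$ because $|B_\varepsilon(x)|=c\varepsilon^2$ is independent of $x$; dominated convergence in time, with dominating function $\|\omega\|_{L^2}^r\|\nabla f(\cdot,t)\|_{L^2}^r\in L^1(0,T)$ for $r\le2$ (as $\nabla f\in L^2(0,T;L^2)$), then yields \eqref{lim3}. I expect \eqref{z1'} and \eqref{lim3} to be the two main obstacles: the former because the merely bounded density $g$ blocks a direct estimate and forces the linearity-plus-density reduction, and the latter because upgrading an $O(1)$ bound to true decay in the $L^\infty$ norm requires the localized-integrability refinement rather than a soft mean-zero cancellation.
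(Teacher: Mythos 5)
Your proposal is correct, and it overlaps with the paper only partially, so a comparison is in order. Parts (i) and the uniform bound \eqref{o2} match the paper's H\"older/Young computations, and your mean-zero device for \eqref{lim2} (writing $\eps\nabla f^\eps=f\star\psi_\eps$ with $\psi=\nabla\omega$, $\int\psi=0$, using translation continuity on a dense class, then dominated convergence in $t$) is the paper's space-time density argument run pointwise in time; the two are interchangeable. In (iii) the paper approximates $f$ and $g$ \emph{simultaneously} by smooth functions in one chain of inequalities, while you prove \eqref{z1} directly by the product rule (using $c_1\le g^\eps\le c_2$ and \eqref{o1-2} with $q=\infty$) and reduce \eqref{z1'} to smooth $f$ via linearity plus the $\eps$-uniform bound, handling $g$ through the mean-zero fact and the embedding $L^\infty(\T^d)\subset L^q(\T^d)$; this is the same substance with a cleaner separation of where $p,q<\infty$ and the compactness of the torus enter. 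The genuine divergence is in (iv), and there your route is actually the safer one: the paper deduces \eqref{o3} from the pointwise difference bound \eqref{h2}, but the essential supremum in $x$ of the right-hand side of \eqref{h2} is \emph{not} controlled by $C\eps^{-1}\|\nabla f(s)\|_{L^2}$ for general $H^1$ functions --- substituting $z=\rho y$ produces $\rho^{-2}\int_{B_{\rho\eps}(x)}|\nabla f|^2\,dz$, and for $|\nabla f|\sim|x-x_0|^{-3/4}\in L^2(\T^2)$ the $\rho$-integral fails to be uniformly bounded near $x_0$ --- whereas your one-line derivation $\nabla f^\eps=(\nabla f)^\eps$ combined with part (i) at $q=d=2$ proves \eqref{o3} without this issue. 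For \eqref{lim3}, the paper again uses density in $L^2(0,T;H^1)$ (legitimate once \eqref{o3} is secured by your direct argument), while your localized Cauchy--Schwarz bound $\eps\|\nabla f^\eps(\cdot,t)\|_{L^\infty}\le\|\omega\|_{L^2}\sup_x\|\nabla f(\cdot,t)\|_{L^2(B_\eps(x))}$ together with the uniform (in the center $x$) absolute continuity of the integral and dominated convergence in time dispenses with dense classes altogether; both are valid, but yours buys a self-contained argument and a modulus of decay tied directly to the local $L^2$ mass of $\nabla f$.
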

\begin{proof}

(i) By the definition of $f^\varepsilon$ and H\"older's inequality, for a.e. $x \in \T^d$ and $s \in (0,T)$, we have
\begin{align*}
	|f^\eps(x,s)| \leq \intTd|f(x-y,s)\omega_{\eps}(y,s)|dy &\leq \left(\intTd|f(x-y,s)|^qdy \right)^{\frac{1}{q}}\left(\intTd|\omega_\eps(y)|^{\frac{q}{q-1}}dy \right)^{\frac{q-1}{q}}\\
	&\leq C\eps^{-\frac dq}\|f(s)\|_{L^q(\T^d)},
\end{align*}
where we have used $\omega_\eps(x) = \eps^{-d}\omega(x/\eps)$ at the last step. This implies 
$$ \| f^\varepsilon(s) \|_{L^\infty(\T^d)} \leq C \varepsilon^{-\frac{d}{q}}\| f(s) \|_{L^q(\T^d)},
$$
which in turn yields \eqref{o1-1}. 

Next we use H\"older's inequality again to estimate
\begin{equation*}
	|\nabla f^\eps(x,s)| \leq \intTd|f(x-y,s)||\na \omega_\eps(y)|dy \leq C\eps^{-1-\frac dq}\|f(s)\|_{L^q(\T^d)}.
\end{equation*}
This leads to \eqref{o1-2}. 

\medskip
(ii) From the fact that $\int_{\mathbb{R}^d}\nabla \ox_\varepsilon(y)dy=0$ and H\"older's inequality, we obtain, for a.e. $x \in \T^d$ and $s \in (0,T)$,
\begin{align*}  
|\nabla f^\varepsilon(x,s)| &\leq \left|\int_{|y|<\varepsilon}[f(x-y,s)-f(x,s)]\nabla \omega_\varepsilon(y)dy \right| \\
%&\leq \left(\int_{|y|<\varepsilon}\left|g(x-y)-g(x)\right|^{p}dy\right)^{\frac{1}{p}} \left( \int_{\mathbb{R}^d} | \nabla \ox_\varepsilon(y)|^{p'}dy\right)^{\frac{1}{p'}} \\
&\leq C\varepsilon^{\frac{d}{q}}\left(\fint_{|y|<\varepsilon}|f(x-y,s)-f(x,s)|^{q}dy\right)^{\frac{1}{q}} \varepsilon^{-1-\frac{d}{q}}\|\na \omega\|_{L^{\frac{q}{q-1}}(\T^d)}.
\end{align*}
It follows that
\begin{equation} \label{h1}
\| \nabla f^\varepsilon(s) \|_{L^q(\T^d)} \leq C\varepsilon^{-1}\left( \fint_{|y|<\varepsilon} \int_{\T^d}\left|f(x-y,s)-f(x,s)\right|^{q}dxdy \right)^{\frac{1}{q}}. 
 \end{equation}
This implies \eqref{o2}. 
Thus, for any $g\in C^\infty(\T^d\times (0,T))$, we have 
\begin{equation*} 
\varepsilon\| \nabla f^\varepsilon \|_{L^p(0,T;L^q(\T^d))} \leq C\varepsilon\|g\|_{C^1_b(\T^d\times(0,T))}+C\|f-g\|_{L^p(0,T;L^q(\mathbb{T}^d))}
\end{equation*}
Therefore, for any $g\in C^\infty(\T^d\times(0,T))$,
\begin{align*}
\limsup_{\varepsilon \to 0} \varepsilon\| \nabla f^\varepsilon \|_{L^p(0,T;L^q(\T^d))} \leq C\|f-g\|_{L^p(0,T;L^q(\mathbb{T}^d))}
\end{align*}
and since $C^\infty(\T^d\times(0,T))$ is dense in $L^p(0,T;L^q(\T^d))$ (for $p, q< \infty$), we obtain \eqref{lim2}.
%
%Put $h_\varepsilon(s):=\varepsilon \| \nabla f^\varepsilon(s) \|_{L^q(\T^d)}$ for $s \in (0,T)$. {\color{red} Assume $p,q<\infty$}, by using density argument and \eqref{h1}, we deduce that $h_\varepsilon(s) \to 0$ as $\varepsilon \to 0$ for a.e. $s \in (0,T)$. On the other hand, we infer from \eqref{o2} that $\{h_\varepsilon\}$ is uniformly bounded in $L^p((0,T))$. This, together with the fact that $r<p$ and H\"older's inequality, implies that $\{h_\varepsilon^r\}$ is equi-integrable. Therefore, by invoking Vitali convergence theorem, we deduce that $h_\varepsilon \to 0$ in $L^r(0,T))$, which is yields \eqref{lim2}. 

\medskip
(iii) We have, for any $g_1,f_1\in C^\infty_b(\T^d\times(0,T))$, 
\begin{align*}
	\left\|\nabla \frac{f^\varepsilon}{g^\varepsilon}\right\|_{L^p(0,T;L^q(\mathbb{T}^d))}&\leq C\varepsilon^{-1}  \|f-f_1\|_{L^p(0,T;L^q(\Omega))}+C\|\nabla g^\varepsilon\|_{L^p(0,T;L^q(\mathbb{T}^d))}\|f_1\|_{L^\infty(\T^d\times(0,T))}\\&
	\leq C\varepsilon^{-1}  \|f-f_1\|_{L^p(0,T;L^q(\Omega))}+C\varepsilon^{-1}\| g-g_1\|_{L^p(0,T;L^q(\mathbb{T}^d))}\|f_1\|_{L^\infty(\T^d\times(0,T))}\\
	&\quad +C\| g_1\|_{C^1_b(\T^d\times(0,T))}\|f_1\|_{L^\infty(\T^d\times(0,T))}
\end{align*}

Thus, by density this implies \eqref{z1} and \eqref{z1'}.

\medskip
(iv) From the fact that $\int_{\mathbb{R}^2}\nabla \ox_\varepsilon(y)dy=0$ and H\"older's inequality, we obtain, for a.e. $x \in \T^d$ and $s \in (0,T)$,  
\begin{equation} \label{h2}\begin{aligned} 
|\nabla f^\varepsilon(x,s)| &\leq \left(\int_{|y|<\varepsilon}\left|f(x-y,s)-f(x,s)\right|^{2}dy\right)^{\frac{1}{2}} \left( \int_{\mathbb{R}^2} | \nabla \ox_\varepsilon(y)|^{2}dy\right)^{\frac{1}{2}}\\ 
&\leq C \varepsilon^{-1} \left( \int_0^1 \int_{|y| <\varepsilon} |\nabla f(x + \rho y,s)|^2 dy d\rho  \right)^{\frac{1}{2}}  \|\nabla\ox\|_{L^2(\mathbb{R}^2)}. 
\end{aligned} \end{equation}
This implies \eqref{o3}. Thus, as (ii), we have, for any  $g\in C^\infty(\T^d\times(0,T))$, 
\begin{equation} 
\limsup_{\varepsilon \to 0} \varepsilon \| \nabla f^\varepsilon \|_{L^2(0,T;L^\infty(\T^2))} \leq C \| f-g \|_{L^2(0,T;H^1(\T^2))}.
\end{equation} 
By using the fact that $C^\infty(\T^d\times(0,T))$ is dense in $L^2(0,T;H^1(\T^d))$, 
we derive 
$$ \limsup_{\varepsilon \to 0} \varepsilon \| \nabla f^\varepsilon \|_{L^2(0,T;L^\infty(\T^2))}=0, $$
which implies \eqref{lim3}. The proof is complete.
\end{proof}

The following variant of Lemma \ref{lemma1} in bounded domains can be proved similarly, so we only state the results. We recall that, for each $\delta > 0$,  $\Omega_\delta = \{x\in\Omega: d(x,\partial\Omega) > \delta \}$.

\begin{lemma} \label{lemma2}
Let $2 \leq d \in {\mathbb N}$,  $\Omega\subset\mathbb R^d$ be a bounded domain with $C^2$ boundary $\partial\Omega$, $1 \leq p,q \leq \infty$ and $f: \Omega\times(0,T) \to {\mathbb R}$.  
\begin{itemize}
\item[(i)] Assume $f \in L^p(0,T;L^q(\Omega))$. Then for any $0<\varepsilon <\delta$, there holds
\begin{equation} \label{o1-1b}
\| f^\varepsilon  \|_{L^p(0,T;L^\infty(\Omega_\delta))} \leq C\varepsilon^{- \frac{d}{q}}\| f \|_{L^p(0,T;L^q(\Omega))},
\end{equation}
\begin{equation} \label{o1-2b}
\|\nabla  f^\varepsilon  \|_{L^p(0,T;L^\infty(\Omega_\delta))} \leq C\varepsilon^{-1- \frac{d}{q}}\| f \|_{L^p(0,T;L^q(\Omega))}.
\end{equation}

\item[(ii)] Assume $f \in L^p(0,T;L^q(\Omega))$. Then for any $0<\varepsilon <\delta$, there holds
\begin{equation} \label{o2b}
\|\nabla f^\varepsilon\|_{L^p(0,T;L^q(\Omega_\delta))}\leq C \varepsilon^{-1}  \|f\|_{L^p(0,T;L^q(\Omega))}.
\end{equation}
Moreover, if $p, q < \infty$ then
\begin{equation} \label{lim2b}
\limsup_{\varepsilon \to 0} \varepsilon\| \nabla f^\varepsilon \|_{L^p(0,T;L^q(\Omega_\delta))} = 0.
\end{equation}	
\item[(iii)] Assume $f \in L^p(0,T;L^q(\Omega))$ with $p,q<\infty$ and $g: \Omega\times(0,T)\to \R$ with $0<c_1\leq g\leq c_2<\infty$. Then for any $0<\varepsilon <\delta$, there holds
\begin{equation}
\left\|\nabla \frac{f^\varepsilon}{g^\varepsilon}\right\|_{L^p(0,T;L^q(\Omega_\delta))}\leq C(c_1,c_2) \varepsilon^{-1}  \|f\|_{L^p(0,T;L^q(\Omega))}.
\end{equation}
\item[(iv)] Assume $d=2$ and $f \in L^2(0,T;H^1(\Omega))$. Then for any $0<\varepsilon <\delta$, there holds
\begin{equation} \label{o3b}
\| \nabla f^\varepsilon \|_{L^2(0,T;L^\infty(\Omega_\delta))} \leq C \varepsilon^{-1}\| f \|_{L^2(0,T;H^1(\Omega))}.
\end{equation}
Moreover, for any $r \in [1,2]$, there holds
\begin{equation} \label{lim3b} 
\limsup_{\varepsilon \to 0} \varepsilon \| \nabla f^\varepsilon \|_{L^r(0,T;L^\infty(\Omega_\delta))} = 0.
\end{equation} 
\end{itemize}
\end{lemma}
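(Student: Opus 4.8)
The plan is to follow the proof of Lemma \ref{lemma1} essentially verbatim, carrying out each of the four parts on the shrunken domain $\Omega_\delta$ while exploiting the restriction $0<\eps<\delta$. The one structural point that must be checked throughout is that the mollification $f^\eps(x)=\int_{|y|<\eps}f(x-y)\omega_\eps(y)\,dy$ is well defined on $\Omega_\delta$ and only ever sees values of $f$ inside $\Omega$: if $x\in\Omega_\delta$ and $|y|<\eps<\delta$, then $d(x-y,\pa\Omega)\ge d(x,\pa\Omega)-|y|>\delta-\eps>0$, so $x-y\in\Omega$. With this observation none of the computations below ever reach the boundary, and the bounded-domain estimates become literal restrictions of their $\T^d$ counterparts.

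For part (i) I would repeat the two pointwise Hölder estimates from Lemma \ref{lemma1}(i): for $x\in\Omega_\delta$ and a.e.\ $s$, $|f^\eps(x,s)|\le (\int_{|y|<\eps}|f(x-y,s)|^q\,dy)^{1/q}\,\|\omega_\eps\|_{L^{q/(q-1)}}\le C\eps^{-d/q}\|f(s)\|_{L^q(\Omega)}$, and the same with $\na\omega_\eps$ in place of $\omega_\eps$ for the gradient; taking the $L^\infty(\Omega_\delta)$-norm in $x$ and then the $L^p$-norm in $s$ yields \eqref{o1-1b} and \eqref{o1-2b}. For part (ii) I would reuse the zero-average identity $\int_{\R^d}\na\omega_\eps=0$ to write $\na f^\eps(x,s)=\int_{|y|<\eps}[f(x-y,s)-f(x,s)]\na\omega_\eps(y)\,dy$, which is legitimate precisely because both $x$ and $x-y$ lie in $\Omega$; this reproduces the estimate \eqref{h1} with $\T^d$ replaced by $\Omega_\delta$, hence \eqref{o2b}. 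The limit \eqref{lim2b} then follows from the same density argument: approximating $f$ by $g\in C^\infty(\Omega\times(0,T))$ (dense in $L^p(0,T;L^q(\Omega))$ when $p,q<\infty$) gives $\eps\|\na f^\eps\|_{L^p(0,T;L^q(\Omega_\delta))}\le C\eps\|g\|_{C^1_b}+C\|f-g\|_{L^p(0,T;L^q(\Omega))}$, and sending $\eps\to0^+$ and then refining $g$ kills the right-hand side.

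Parts (iii) and (iv) are handled in the same spirit. For (iii) I would use the quotient/product decomposition of Lemma \ref{lemma1}(iii), combining the gradient bound \eqref{o2b} for $g^\eps$ with the lower bound $g^\eps\ge c_1>0$ (inherited from $g\ge c_1$ by averaging) and an $L^\infty$ bound on the smooth approximant of $f$; a density argument then gives the stated estimate. For (iv), restricted to $d=2$, I would reproduce the two-dimensional estimate \eqref{h2}: writing $f(x-y,s)-f(x,s)=-\int_0^1 y\cdot\na f(x-\rho y,s)\,d\rho$ and pairing with $\|\na\omega_\eps\|_{L^2(\R^2)}\le C\eps^{-1}$ via Hölder gives \eqref{o3b}, and the usual density argument gives \eqref{lim3b}.

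I expect the only genuine obstacle---and it is a mild one---to be the bookkeeping that keeps the boundary out of the estimates: one must keep $\delta$ fixed and send $\eps\to0^+$ through values $\eps<\delta$, and one must take the smooth approximants $g$ globally on $\Omega$ (not merely on $\Omega_\delta$) so that $f-g$ is controlled in $L^p(0,T;L^q(\Omega))$ and not only on the interior. Once these points are fixed, every step is exactly the $\Omega_\delta$-restriction of the corresponding step for $\T^d$, which is why it suffices to state the result.
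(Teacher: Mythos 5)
Your proposal is correct and coincides with the paper's own treatment: the paper gives no separate proof of Lemma \ref{lemma2}, stating only that it ``can be proved similarly'' to Lemma \ref{lemma1}, which is exactly what you carry out. Your one added ingredient---the observation that for $x\in\Omega_\delta$ and $|y|<\eps<\delta$ one has $x-y\in\Omega$, so every mollification identity (including the zero-average trick $\int\nabla\omega_\eps=0$) is a literal restriction of the torus computation, together with taking the smooth approximants globally on $\Omega$---is precisely the bookkeeping the authors leave implicit.
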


\begin{lemma}\footnote{Similar estimates were obtained in \cite{CET94} in the context of H\"older spaces.}\label{keyla} Let $p,p_1\in [1,\infty)$ and $p_2\in (1,\infty]$ with $\frac{1}{p}=\frac{1}{p_1}+\frac{1}{p_2}$. Assume $f\in L^{p_1}(0,T;W^{1,p_1}(\mathbb{T}^d))$ and $g\in L^{p_2}(\T^d \times (0,T))$. Then for any $\varepsilon> 0$, there holds
	\begin{align} \label{fg'}
	\|(fg)^\varepsilon-f^\varepsilon g^\varepsilon\|_{L^p(\T^d \times (0,T))}\leq C\varepsilon \|f\|_{L^{p_1}(0,T;W^{1,p_1}(\T^d))}\|g\|_{L^{p_2}(\T^d \times (0,T))}.
	\end{align}
	Moreover, if $p_2<\infty$ then 
	\begin{align}\label{limite'}
	\limsup_{\varepsilon \to 0}\varepsilon^{-1} \|(fg)^\varepsilon-f^\varepsilon g^\varepsilon\|_{L^p(\T^d \times (0,T))}=0.
	\end{align}
\end{lemma}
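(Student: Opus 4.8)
The plan is to reduce everything to the classical Constantin--E--Titi commutator identity. Writing $\delta_y f(x) := f(x-y)-f(x)$ and expanding the product $\delta_y f(x)\,\delta_y g(x)$, then multiplying by $\omega_\eps(y)$ and integrating in $y$, one obtains the pointwise identity
\begin{equation*}
(fg)^\eps - f^\eps g^\eps = \int_{|y|<\eps}\omega_\eps(y)\,\delta_y f\,\delta_y g\,dy \;-\; (f^\eps - f)(g^\eps - g).
\end{equation*}
This splits the commutator into two bilinear pieces, each of which I would bound in $L^p_x$ for a.e.\ fixed time and then integrate in $s$. The whole estimate is driven by the fact that the single power of $\eps$ is extracted from $f$, never from $g$.

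For the first term I would apply Minkowski's integral inequality to move the $L^p_x$ norm inside the $y$-integral, followed by H\"older's inequality in $x$ with the splitting $\tfrac1p=\tfrac1{p_1}+\tfrac1{p_2}$, producing $\|\delta_y f(s)\|_{L^{p_1}}\|\delta_y g(s)\|_{L^{p_2}}$. Since $f\in W^{1,p_1}$, the difference-quotient bound $\|\delta_y f(s)\|_{L^{p_1}}\le|y|\,\|\na f(s)\|_{L^{p_1}}\le\eps\|\na f(s)\|_{L^{p_1}}$ holds for $|y|<\eps$, while $g$ is controlled only crudely by $\|\delta_y g(s)\|_{L^{p_2}}\le 2\|g(s)\|_{L^{p_2}}$. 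The second term is treated identically, using the standard mollification bound $\|f^\eps-f\|_{L^{p_1}}\le\eps\|\na f\|_{L^{p_1}}$ together with $\|g^\eps-g\|_{L^{p_2}}\le 2\|g\|_{L^{p_2}}$ (valid up to and including $p_2=\infty$ by Young's inequality). Either way I am left with $\eps$ times $\|\na f(s)\|_{L^{p_1}}\|g(s)\|_{L^{p_2}}$; integrating in $s$ and applying H\"older in time with the conjugate exponents $p_1/p$ and $p_2/p$ (whose reciprocals sum to $1$ precisely because $\tfrac1p=\tfrac1{p_1}+\tfrac1{p_2}$) yields \eqref{fg'}.

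The refined statement \eqref{limite'} is where the real work lies, since the estimate above only gives $O(\eps)$: the point is to squeeze out a \emph{second} power of $\eps$. I would resolve this by a density argument exploiting $p_2<\infty$. Given $\eta>0$, pick $g_1\in C_c^\infty(\T^d\times(0,T))$ with $\|g-g_1\|_{L^{p_2}(\T^d\times(0,T))}<\eta$; by bilinearity the commutator splits as $B_\eps(f,g)=B_\eps(f,g-g_1)+B_\eps(f,g_1)$, where $B_\eps$ denotes the left-hand side of \eqref{fg'}. The rough piece is bounded by \eqref{fg'} by $C\eps\|f\|_{L^{p_1}(0,T;W^{1,p_1})}\,\eta$, so after dividing by $\eps$ it contributes at most $C\|f\|_{L^{p_1}(0,T;W^{1,p_1})}\,\eta$. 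For the smooth piece I repeat the two-term estimate, but now I may also use the Lipschitz bounds $\|\delta_y g_1(s)\|_{L^{p_2}}\le|y|\,\|\na g_1(s)\|_{L^{p_2}}$ and $\|g_1^\eps-g_1\|_{L^{p_2}}\le\eps\|\na g_1\|_{L^{p_2}}$, which contribute the extra factor and give an $\eps^2$ overall; hence $\eps^{-1}\|B_\eps(f,g_1)\|_{L^p}\le C\eps\|\na f\|_{L^{p_1}(0,T;L^{p_1})}\|\na g_1\|_{L^{p_2}(\T^d\times(0,T))}\to0$ as $\eps\to0$. Sending $\eps\to0$ first and then $\eta\to0$ establishes \eqref{limite'}. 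The only delicate bookkeeping throughout is the time integration, but it is always closed off by the one H\"older inequality dictated by $\tfrac1p=\tfrac1{p_1}+\tfrac1{p_2}$.
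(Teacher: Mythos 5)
Your proof is correct and follows essentially the same route as the paper: the same Constantin--E--Titi decomposition $(fg)^\eps-f^\eps g^\eps=\int\omega_\eps(y)\,\delta_y f\,\delta_y g\,dy-(f^\eps-f)(g^\eps-g)$, estimated by H\"older in space and then in time with exponents $p_1/p$ and $p_2/p$, extracting the single power of $\eps$ from the $W^{1,p_1}$ difference-quotient bound on $f$, yields \eqref{fg'}. For \eqref{limite'}, the paper's terse ``density argument'' is precisely what you spell out explicitly — approximating $g$ in $L^{p_2}$ (where $p_2<\infty$ is used) so that the smooth piece gains a second factor of $\eps$ while the rough piece is uniformly small — so your write-up is the same argument with the details filled in.
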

\begin{proof}
 We note that (see e.g. \cite{CET94})
	\begin{align}\label{z2}
	(fg)^\varepsilon-f^\varepsilon g^\varepsilon=R^\varepsilon-(f^\varepsilon-f)(g^\varepsilon-g)
	\end{align}
	where
	\begin{align*}
	R^\varepsilon(x,s):=\int_{\mathbb{R}^d}(f(x-y,s)-f(x,s)) (g(x-y,s)-g(x,s))\ox_\varepsilon(y)dy.
	\end{align*} This yields
	\begin{align}\label{z3}
	\|(fg)^\varepsilon-f^\varepsilon g^\varepsilon\|_{L^p(\T^d \times (0,T))}\leq \|R^\varepsilon\|_{L^p(\T^d \times (0,T))}+\|(f-f^\varepsilon)(g-g^\varepsilon)\|_{L^p(\T^d \times (0,T))}.
	\end{align}	
The first term on the right hand-side of \eqref{z3} can be estimated using H\"older's inequality as 
 \begin{align} \label{lima} \nonumber
	&\|R^\varepsilon\|_{L^p(\T^d \times (0,T))}\\
	&\leq C\varepsilon \left[\int_{0}^{T}\!\!\!\int_{0}^{1}\!\!\!\fint\limits_{|y|<\varepsilon}\!\!\int_{\mathbb{T}^d} |\nabla f (x+\rho y,s)|^{p_1} dxdyd\rho ds \right]^{\frac{1}{p_1}} \left[ \int_{0}^{T}\!\!\!\!\!\!\fint\limits_{|y|<\varepsilon}\!\!\int_{\mathbb{T}^d}  |g(x-y,s)-g(x,s)|^{p_2}  dxdy ds \right]^{\frac{1}{p_2}}.
	\end{align}	 
Similarly, one can show that the second term on the right hand-side of \eqref{z3} is bounded from above by the term on the right hand-side of \eqref{lima}.
% \begin{align*}
%	&\|(f-f^\varepsilon)(g-g^\varepsilon)\|_{L^p(\T^d)} \\
%&\leq C\varepsilon^{1-\frac{d}{p}} \left[\int_{0}^{T}\!\!\!\int_{0}^{1}\!\!\!\int\limits_{|y|<\varepsilon}\!\!\int_{\mathbb{T}^d} |\nabla f (s,x+\rho y)|^{p_1} dxdyd\rho ds \right]^{\frac{1}{p_1}} \left[ \int_{0}^{T}\!\!\!\!\!\!\int\limits_{|y|<\varepsilon}\!\!\int_{\mathbb{T}^d}  |g(s,x-y)-g(s,x)|^{p_2}  dxdy ds \right]^{\frac{1}{p_2}}.
%\end{align*}	 
Therefore
	\begin{align} \label{lep} \nonumber
	&\|(fg)^\varepsilon-f^\varepsilon g^\varepsilon\|_{L^p(\T^d \times (0,T))} \\ 
	&\leq C\varepsilon \left[\int_{0}^{T}\!\!\!\int_{0}^{1}\!\!\!\fint\limits_{|y|<\varepsilon}\!\!\int_{\mathbb{T}^d} |\nabla f (x+\rho y,s)|^{p_1} dxdyd\rho ds \right]^{\frac{1}{p_1}} \left[ \int_{0}^{T}\!\!\!\!\!\!\fint\limits_{|y|<\varepsilon}\!\!\int_{\mathbb{T}^d}  |g(x-y,s)-g(x,s)|^{p_2}  dxdy ds \right]^{\frac{1}{p_2}}.
	\end{align}	
	This implies \eqref{fg'}.  	
By using \eqref{lep} and the density argument, we get \eqref{limite'}.	
\end{proof}

Similar results can be obtained for bounded domains using only minor modifications.
\begin{lemma}\label{keylabd} Let $\Omega\subset\mathbb R^d$ be a bounded domain with $C^2$ boundary $\partial\Omega$, $p,p_1\in [1,\infty)$ and $p_2\in (1,\infty]$ with $\frac{1}{p}=\frac{1}{p_1}+\frac{1}{p_2}$. Assume $f\in L^{p_1}(0,T;W^{1,p_1}(\Omega))$ and $g\in L^{p_2}(\Omega \times (0,T))$. Then for any $0<\varepsilon<\delta$ small, there holds
	\begin{align} \label{fg}
	\|(fg)^\varepsilon-f^\varepsilon g^\varepsilon\|_{L^p( \Omega_{2\delta} \times (0,T))}\leq C\varepsilon \|f\|_{L^{p_1}(0,T;W^{1,p_1}(\Omega_\delta))}\|g\|_{L^{p_2}( \Omega_\delta \times (0,T))}.
	\end{align}
	Moreover, if $p_2<\infty$ then 
	\begin{align}\label{limite}
	\limsup_{\varepsilon \to 0}\varepsilon^{-1} \|(fg)^\varepsilon-f^\varepsilon g^\varepsilon\|_{L^p( \Omega_{2\delta} \times (0,T))}=0.
	\end{align}
\end{lemma}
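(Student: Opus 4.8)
The plan is to localize the periodic argument of Lemma~\ref{keyla}; the only genuinely new ingredient is a geometric observation guaranteeing that all sampled arguments stay inside $\Omega_\delta$, after which the computation is a verbatim transcription with $\T^d$ replaced by $\Omega_\delta$. I would begin from the same algebraic decomposition as in \eqref{z2}, namely $(fg)^\varepsilon - f^\varepsilon g^\varepsilon = R^\varepsilon - (f^\varepsilon-f)(g^\varepsilon-g)$ with
\[
R^\varepsilon(x,s)=\int_{\mathbb R^d}\big(f(x-y,s)-f(x,s)\big)\big(g(x-y,s)-g(x,s)\big)\ox_\varepsilon(y)\,dy,
\]
which holds pointwise wherever the mollifications are defined.

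The crucial localization is the following distance inequality: fix $x\in\Omega_{2\delta}$, $|y|<\varepsilon<\delta$ and $\rho\in[0,1]$. Then $d(x-\rho y,\pa\Omega)\ge d(x,\pa\Omega)-\rho|y|>2\delta-\delta=\delta$, so $x-\rho y\in\Omega_\delta$; in particular $x-y\in\Omega_\delta$. Consequently, for every $x\in\Omega_{2\delta}$ the integrands defining $R^\varepsilon(x,s)$ and $(f^\varepsilon-f)(g^\varepsilon-g)(x,s)$ only sample $f$, $\na f$ and $g$ at points of $\Omega_\delta$. This is exactly what forces the right-hand norms in \eqref{fg} to be taken over $\Omega_\delta$ rather than over all of $\Omega$, and it guarantees that $f^\varepsilon, g^\varepsilon$, and hence the left-hand side, are well defined on $\Omega_{2\delta}$ as soon as $\varepsilon<\delta$.

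With this in hand I would bound $R^\varepsilon$ exactly as on the torus: write $f(x-y,s)-f(x,s)=-\int_0^1 y\cdot\na f(x-\rho y,s)\,d\rho$, so that $|f(x-y,s)-f(x,s)|\le\varepsilon\int_0^1|\na f(x-\rho y,s)|\,d\rho$ for $|y|<\varepsilon$, and then apply H\"older in the pair $(p_1,p_2)$ together with Minkowski in the $\rho$-integral. Integrating in $x$ over $\Omega_{2\delta}$ and in $s$ over $(0,T)$ and using Fubini with the translation changes of variables $x\mapsto x-\rho y$ and $x\mapsto x-y$ (each of which, by the inclusion above, maps $\Omega_{2\delta}$ into $\Omega_\delta$) reproduces the estimate \eqref{lima} with $\int_{\T^d}$ replaced by $\int_{\Omega_\delta}$, giving $\|R^\varepsilon\|_{L^p(\Omega_{2\delta}\times(0,T))}\le C\varepsilon\|f\|_{L^{p_1}(0,T;W^{1,p_1}(\Omega_\delta))}\|g\|_{L^{p_2}(\Omega_\delta\times(0,T))}$. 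The term $(f^\varepsilon-f)(g^\varepsilon-g)$ is estimated by the same device and obeys the same bound, which together yield \eqref{fg}.

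Finally, for \eqref{limite} (when $p_2<\infty$) I would divide the $R^\varepsilon$-estimate by $\varepsilon$ and note that the factor carrying $g$, namely $\big[\int_0^T\fint_{|y|<\varepsilon}\int_{\Omega_\delta}|g(x-y,s)-g(x,s)|^{p_2}\,dx\,dy\,ds\big]^{1/p_2}$, tends to $0$ as $\varepsilon\to0$ by continuity of translations in $L^{p_2}(\Omega_\delta\times(0,T))$, while the $\na f$ factor stays bounded by $\|f\|_{L^{p_1}(0,T;W^{1,p_1}(\Omega_\delta))}$. I expect no serious obstacle: the substance of the lemma is entirely contained in the distance inequality $d(x-\rho y,\pa\Omega)>\delta$, and once that is recorded the proof is a line-by-line copy of Lemma~\ref{keyla} with the torus replaced by the interior layer $\Omega_\delta$. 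The only point demanding a little care is to keep the two spatial scales straight --- evaluating on $\Omega_{2\delta}$ but measuring the data on $\Omega_\delta$ --- which is precisely the role played by the factor of $2$ in the statement.
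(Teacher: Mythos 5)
Your proposal is correct and follows essentially the route the paper intends: the paper proves the torus version (Lemma \ref{keyla}) via the decomposition $(fg)^\varepsilon-f^\varepsilon g^\varepsilon=R^\varepsilon-(f^\varepsilon-f)(g^\varepsilon-g)$ and states that the bounded-domain case follows ``using only minor modifications,'' which are exactly the localization you supply. Your distance inequality $d(x-\rho y,\pa\Omega)>2\delta-\delta=\delta$ for $x\in\Omega_{2\delta}$, $|y|<\varepsilon<\delta$ is precisely the point that justifies evaluating on $\Omega_{2\delta}$ while measuring $f$, $\na f$, $g$ on $\Omega_\delta$, and your continuity-of-translations step for \eqref{limite} is the same density argument the paper uses for \eqref{limite'}.
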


The following interpolation lemma shows that the additional assumption $\uu \in L^4(\Omega \times (0,T))$ is only needed in three dimensions.
\begin{lemma} \label{GN}  
Let either $\Omega = \mathbb T^2$ or $\Omega\subset\mathbb R^2$ be a bounded domain with $C^2$ boundary. Assume $f \in L^\infty(0,T;L^2(\Omega)) \cap L^2(0,T;H^1(\Omega))$. Then $f \in L^4(\Omega \times (0,T))$ and
\begin{equation} \label{GN1}
\| f \|_{L^4(\Omega \times (0,T))} \leq C \| f\|_{L^\infty(0,T;L^2(\Omega))}^{\frac{1}{2}} \| f \|_{L^2(0,T;H^1(\Omega))}^{\frac{1}{2}}.
\end{equation}
\end{lemma}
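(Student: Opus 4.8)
The plan is to reduce the space--time estimate to the classical two-dimensional Gagliardo--Nirenberg (Ladyzhenskaya) inequality applied at each fixed time, followed by integration in the time variable. The essential spatial ingredient is that, in dimension two, for a.e.\ $t\in(0,T)$ one has
\begin{equation*}
\|f(\cdot,t)\|_{L^4(\Omega)}^2 \leq C\,\|f(\cdot,t)\|_{L^2(\Omega)}\,\|f(\cdot,t)\|_{H^1(\Omega)}.
\end{equation*}
This in turn follows from the Sobolev embedding $H^{1/2}(\Omega)\hookrightarrow L^4(\Omega)$ (valid precisely because $s=1/2$ is the critical exponent $d/2-d/4$ when $d=2$) together with the interpolation inequality $\|g\|_{H^{1/2}(\Omega)}\leq \|g\|_{L^2(\Omega)}^{1/2}\|g\|_{H^1(\Omega)}^{1/2}$ obtained by interpolating between $L^2=H^0$ and $H^1$. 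On $\T^2$ both facts are immediate from Fourier analysis; on a bounded domain with $C^2$ boundary they follow by composing the corresponding estimates on $\R^2$ with a bounded Sobolev extension operator $E\colon H^1(\Omega)\to H^1(\R^2)$, whose existence is guaranteed by the regularity of $\partial\Omega$.

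Granting the spatial inequality, the remainder is a short computation. First I would raise it to the fourth power to get
\begin{equation*}
\|f(\cdot,t)\|_{L^4(\Omega)}^4 \leq C\,\|f(\cdot,t)\|_{L^2(\Omega)}^2\,\|f(\cdot,t)\|_{H^1(\Omega)}^2,
\end{equation*}
and then integrate over $(0,T)$. Bounding the $L^2$ factor by its essential supremum and extracting it from the integral gives
\begin{equation*}
\int_0^T \|f(\cdot,t)\|_{L^4(\Omega)}^4\,dt \leq C\,\|f\|_{L^\infty(0,T;L^2(\Omega))}^2 \int_0^T \|f(\cdot,t)\|_{H^1(\Omega)}^2\,dt = C\,\|f\|_{L^\infty(0,T;L^2(\Omega))}^2\,\|f\|_{L^2(0,T;H^1(\Omega))}^2.
\end{equation*}
Since the right-hand side is finite under the stated hypotheses, this simultaneously shows $f\in L^4(\Omega\times(0,T))$ and, upon taking the fourth root, yields exactly \eqref{GN1}.

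The only genuinely delicate point is that the constant $C$ in the spatial inequality must be uniform in $t$; this is automatic on $\T^2$, while on the bounded domain it is ensured by the $t$-independence of the extension operator and of the Sobolev and interpolation constants, which is where the $C^2$ regularity of $\partial\Omega$ is used. Everything else is a routine application of H\"older's inequality in the time variable, so I do not anticipate any further difficulty.
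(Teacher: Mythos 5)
Your proof is correct and follows exactly the route the paper intends: the paper simply states that the lemma ``follows directly from Gagliardo--Nirenberg's inequality'' and omits the details, and your argument --- the 2D Ladyzhenskaya-type bound $\|f(\cdot,t)\|_{L^4(\Omega)}^2 \leq C\|f(\cdot,t)\|_{L^2(\Omega)}\|f(\cdot,t)\|_{H^1(\Omega)}$ applied at a.e.\ fixed time, followed by integration in $t$ --- is precisely the standard filling-in of that citation. Your care about the $t$-uniformity of the constant (via a fixed extension operator for the bounded-domain case) is a sound way to justify the spatial inequality, so nothing further is needed.
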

\begin{proof} 
The proof follows directly from Gargliardo-Nirenberg's inequality so we omit it.
\end{proof}

To deal with domains with boundary, we need the following coarea formula for $0<\kappa_1<\kappa_2$
\begin{equation}\label{coarea}
\int_{\kappa_1}^{\kappa_2}\int_{\pa\Omega_{\kappa}}g(\theta)\dH(\theta)d\kappa = \int_{\Omega_{\kappa_1}\backslash\Omega_{\kappa_2}}g(x)dx.
\end{equation}
%\begin{lemma}
%	Let $\Omega\subset\mathbb R^d$ and $f\in L^2(0,T;H^1(\Omega))\cap L^4((0,T)\times\Omega)$ with $f = 0$ on $(0,T)\times\pa\Omega$ (in the trace sense). Then for any $1\leq q \leq 3$ we have
%	\begin{equation}\label{trace}
%		\lim_{\delta\to 0}\int_0^T\fint_{\Omega\backslash\Omega_\delta}|f|^qdxds = 0.
%	\end{equation}
%\end{lemma}
%\begin{proof}
%	It is sufficient to only prove for $q = 3$. Since $\partial\Omega$ is of $C^2$ class, {\color{blue} for small $\delta$ and each $x\in \Omega\backslash\Omega_\delta$ there exists a unique $x_\delta \in \pa\Omega$ such that $|x - x_\delta| = d(x,\pa\Omega) \leq \delta$.} By direct computations, using $f = 0$ on $\pa\Omega$ and H\"older's inequality
%	\begin{align*}
%		&\int_0^T\fint_{\Omega\backslash\Omega_\delta}|f(x,s)|^3dxds\\
%		&= 
%		\frac{1}{\delta}\int_0^T\int_{\Omega\backslash\Omega_\delta}|f(x,s)|^2|f(x,s) - f(x_\delta,s)|dxds\\
%		&\leq \frac{1}{\delta}\int_0^T\int_{\Omega\backslash\Omega_\delta}|f(x,s)|^2\int_0^1|\na f(x_\delta + r(x-x_\delta))| |(x - x_\delta)| drdxds\\
%		&\overset{|x - x_\delta|\leq \delta}{\leq} \left(\int_0^T\int_{\Omega\backslash\Omega_\delta}|f(x,s)|^4dxds \right)^{1/2}{\color{red}\left(\int_0^T\int_{\Omega\backslash\Omega_\delta}\int_0^1|\na f(x_\delta + r(x-x_\delta),s)|^2drdxds \right)^{1/2} }.
%	\end{align*}
%	This point is not clear!! Note that $x_\delta=K(x)$, you need to make a change of variables!!!.\\
%	Since $f\in L^2(0,T;H^1(\Omega))\cap L^4((0,T)\times\Omega)$, we see that the right hand side vanishes as $\delta \to 0$, which finishes the proof.
%\end{proof}

\begin{lemma} \label{Poincare} Let $\Omega$ be a bounded domain with $C^2$ boundary. Assume that $f \in L^2(0,T;H_0^1(\Omega))$. Then, for $\delta>0$ small,
\begin{align}\label{z200}
\| f\|_{L^2((\Omega\setminus \Omega_{\delta}) \times (0,T))} \leq C \delta \| \nabla f \|_{L^2((\Omega \setminus \Omega_{2\delta}) \times (0,T))}.
\end{align}	
\end{lemma}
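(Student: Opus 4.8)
The plan is to reduce \eqref{z200} to a one‑dimensional Poincaré inequality along the normals to $\partial\Omega$ and then integrate. First I would peel off the time variable: it suffices to prove, for a.e.\ fixed $t$ and for the map $x\mapsto f(x,t)$, a pointwise‑in‑$\delta$ estimate of the form $\int_{\Omega\setminus\Omega_\delta}|f|^2\,dx\le C\delta^2\int_{\Omega\setminus\Omega_{2\delta}}|\nabla f|^2\,dx$ with $C$ independent of $\delta$ and of $t$, and then integrate the squared inequality over $(0,T)$. Since $C_c^\infty(\Omega)$ is dense in $H_0^1(\Omega)$, I would establish this estimate first for $f\in C_c^\infty(\Omega)$ and pass to the limit; the density reduction is precisely what encodes the vanishing trace of $f$ on $\partial\Omega$.

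Next I would set up normal coordinates near the boundary. Using the $C^2$ regularity of $\partial\Omega$ and the $C^1_b$ normal field $n$ recorded before Theorem \ref{cNS-domain}, the map $\Phi(\sigma,r)=\sigma-r\,n(\sigma)$ is a $C^1$ diffeomorphism from $\partial\Omega\times[0,r_0)$ onto $\Omega\setminus\Omega_{r_0}$ with $d(\Phi(\sigma,r),\partial\Omega)=r$, and its Jacobian $J$ satisfies two‑sided bounds $0<c_*\le J(\sigma,r)\le C_*$ depending only on the geometry of $\Omega$. In particular $\Phi(\cdot,(0,\delta))$ parametrizes exactly $\Omega\setminus\Omega_\delta$ for every $0<\delta<r_0$. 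Because $f$ vanishes on $\partial\Omega$, for $x=\Phi(\sigma,r)$ with $r<\delta$ the fundamental theorem of calculus gives
\[
f(x)=\int_0^r \partial_s\big[f(\Phi(\sigma,s))\big]\,ds=-\int_0^r \nabla f(\Phi(\sigma,s))\cdot n(\sigma)\,ds,
\]
so that by Cauchy--Schwarz $|f(x)|^2\le \delta\int_0^\delta|\nabla f(\Phi(\sigma,s))|^2\,ds$.

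I would then integrate this pointwise bound over the layer. Changing variables $x=\Phi(\sigma,r)$, using $c_*\le J\le C_*$, and applying Fubini in the $(r,s)$ variables yields
\[
\int_{\Omega\setminus\Omega_\delta}|f|^2\,dx
\le C\,\delta\int_{\partial\Omega}\!\int_0^\delta\!\Big(\int_0^\delta|\nabla f(\Phi(\sigma,s))|^2\,ds\Big)dr\,d\sigma
= C\,\delta^2\int_{\Omega\setminus\Omega_\delta}|\nabla f|^2\,dx,
\]
where the last step converts back to $x$‑coordinates using the lower bound on $J$. Since $\Omega\setminus\Omega_\delta\subset\Omega\setminus\Omega_{2\delta}$, the right‑hand side is controlled by $\int_{\Omega\setminus\Omega_{2\delta}}|\nabla f|^2\,dx$; taking square roots, integrating in $t$, and removing the smoothness assumption by density gives \eqref{z200}. (The integration step can equally well be organized through the coarea formula \eqref{coarea} applied to $|f|^2$ and $|\nabla f|^2$, which is the tool already isolated in the excerpt.)

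The main technical obstacle is the geometric input: producing the normal coordinate map $\Phi$ as a genuine $C^1$ diffeomorphism whose Jacobian is bounded above and below by constants that are \emph{uniform in $\delta$}. This is exactly where the $C^2$ hypothesis on $\partial\Omega$ is used, since it makes the distance function $C^2$ in a fixed tubular neighborhood $\Omega\setminus\Omega_{r_0}$ and the field $n$ of class $C^1_b$. Once $\Phi$ and the bounds on $J$ are fixed on the whole tube $\partial\Omega\times(0,r_0)$ and one restricts to $0<\delta<r_0/2$, the constant $C$ in the estimate depends only on these fixed geometric quantities and not on $\delta$, which is what the statement requires.
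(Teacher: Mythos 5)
Your proof is correct, and while it rests on the same core mechanism as the paper's proof of Lemma \ref{Poincare} --- the fundamental theorem of calculus along normal segments emanating from $\partial\Omega$, Cauchy--Schwarz, and a change of variables with two-sided Jacobian bounds --- the implementation is genuinely different and in fact more robust. The paper foliates the collar by the level sets $\partial\Omega_\kappa$ via the coarea formula \eqref{coarea}, joins $\theta\in\partial\Omega_\kappa$ to its nearest-point projection $\mathcal{T}(\theta)$ through the interpolated maps $\mathcal{T}_\rho(x)=x+\rho(\mathcal{T}(x)-x)$, applies Cauchy--Schwarz in the normalized parameter $\rho\in(0,1)$ in \eqref{poin1}, and then changes variables in \eqref{poin2} using the bound $\tfrac12\le|\det(\nabla\mathcal{T}_\rho)|\le\tfrac32$ deduced from \eqref{z100}; you instead use the tubular chart $\Phi(\sigma,r)=\sigma-r\,n(\sigma)$ with uniform bounds $0<c_*\le J\le C_*$ on the fixed tube, apply Cauchy--Schwarz in the arclength variable $s$, and conclude by Fubini. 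The arclength bookkeeping is the better choice at the one delicate point of the argument: since $\nabla\mathcal{T}=\mathbf{id}-\nabla d\otimes\nabla d-d\,\nabla^2 d$ converges to the \emph{tangential projector} (not to $\mathbf{id}$) as one approaches $\partial\Omega$, one has $\det(\nabla\mathcal{T}_\rho)=(1-\rho)+O(\delta)$, which degenerates as $\rho\to 1$ (indeed $\mathcal{T}_1=\mathcal{T}$ collapses the collar onto the boundary), so the uniform lower Jacobian bound invoked in \eqref{poin2} is problematic after $|\mathcal{T}(\theta)-\theta|^2$ has been bounded crudely by $\delta^2$; keeping $|\mathcal{T}(\theta)-\theta|=\kappa$ and substituting $s=(1-\rho)\kappa$ --- which is exactly your computation in the $\Phi$-coordinates --- absorbs this degeneracy cleanly. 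Your version also yields the slightly stronger inequality with $\Omega\setminus\Omega_\delta$ rather than $\Omega\setminus\Omega_{2\delta}$ on the right (so the enlargement to $2\delta$ in \eqref{z200} becomes unnecessary), and it makes explicit the reduction by density of $C_c^\infty(\Omega)$ in $H_0^1(\Omega)$, which is where the vanishing trace used in the paper's identity $f(\theta,s)=f(\theta,s)-f(\mathcal{T}(\theta),s)$ is rigorously justified.
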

\begin{proof} Let $\delta>0$ be small. For any $x\in \Omega\backslash\Omega_{\delta}$, there exists a unique $x_{\partial \Omega}$ such that $|x_{\partial \Omega}-x|=d(x,\partial \Omega)$. Let $\mathcal{T}$ be the projection mapping defined by $\mathcal{T}(x):=x_{\partial \Omega}$. Then we have 
\begin{align}\label{z100}
||\nabla \mathcal{T}-\mathbf{id}||_{L^\infty(\Omega\backslash\Omega_{\delta})}=\circ(1) ~\text{as}~\delta\to 0.
\end{align}
Since $f=0$ on $\partial \Omega \times (0,T)$, by the coarea formula, we have
\begin{equation} \label{poin1} \begin{aligned}
\int_{\Omega\backslash\Omega_{\delta}}| f(x,s)|^2dx&=\int_0^\delta \int_{\partial \Omega_\kappa} |f (\theta,s)|^2 d\mathcal{H}^{d-1}(\theta) d\kappa\\
&=\int_0^\delta \int_{\partial \Omega_\kappa} |f(\theta,s)-f (\mathcal{T}(\theta),s)|^2 d\mathcal{H}^{d-1}(\theta) d\kappa
\\&\leq \int_0^\delta \int_{\partial \Omega_\kappa}\int_{0}^1 |\nabla f (\theta+\rho (\mathcal{T}(\theta)-\theta),s)|^2|\mathcal{T}(\theta)-\theta|^2 d\rho d\mathcal{H}^{d-1}(\theta) d\kappa
\\&\leq \delta^2 \int_0^\delta \int_{\partial \Omega_\kappa}\int_{0}^1 |\nabla f (\theta+\rho (\mathcal{T}(\theta)-\theta),s)|^2 d\rho d\mathcal{H}^{d-1}(\theta) d\kappa
\\&= \delta^2 \int_{0}^1\int_{\Omega\backslash\Omega_{\delta}} |\nabla f (x+\rho (\mathcal{T}(x)-x),s)|^2 dx d\rho.
\end{aligned} \end{equation}
Set $\mathcal{T}_\rho(x)=x+\rho (\mathcal{T}(x)-x)$. From \eqref{z100}, we deduce, for $\delta>0$ small and $\rho \in (0,1)$, that
\begin{align*}
\frac{1}{2}\leq |\det (\nabla \mathcal{T}_\rho(x)) |\leq \frac{3}{2} \quad \text{and} \quad  \mathcal{T}_\rho(\Omega\backslash\Omega_{\delta})\subset \Omega\backslash\Omega_{2\delta}
\end{align*}
Therefore
\begin{equation} \label{poin2} \begin{aligned}
\int_{0}^1\int_{\Omega\backslash\Omega_{\delta}} |\nabla f (x+\rho (\mathcal{T}(x)-x),s)|^2 dx d\rho& = \int_{0}^1\int_{\mathcal{T}_\rho(\Omega\backslash\Omega_{\delta})} |\nabla f (x,s)|^2 \frac{dx}{|\det (\nabla \mathcal{T}_\rho)(\mathcal{T}_\rho^{-1}(x)) |} d\rho \\ 
&\leq 2\int_{\Omega\backslash\Omega_{2\delta}} |\nabla f (x,s)|^2 dx.
\end{aligned} \end{equation}
Therefore, from \eqref{poin1} and \eqref{poin2} we derive \eqref{z200}.	
\end{proof}

\section{Proof of Theorem \ref{cNS-torus}}
By smoothing \eqref{NS}, we obtain
	\begin{equation}\label{Trho_approx}
	\pa_t\vre + \di(\vr \uu)^\eps = 0
	\end{equation}
	and
	\begin{equation}\label{Tu_approx}
	\pa_t(\vr\uu)^\eps + \di(\vr\uu\otimes\uu)^\eps + \nabla (p(\vr))^\varepsilon -  \nabla \cdot (\nu(\varrho) \D \uu)^\varepsilon  - \nabla(\mu(\vr)\di \uu)^\eps= 0
	\end{equation}
	for any $0<\varepsilon<1$. 
	
Multiplying \eqref{Tu_approx} by $(\varrho^\varepsilon)^{-1}(\varrho \uu)^\varepsilon$ then integrating on $(\tau,t)\times\mathbb T^d$, for $0<\tau<t<T$, we get
\begin{equation} \label{multiply} \begin{aligned}
&\int_{\tau}^{t}\int_{\mathbb{T}^d}  \frac{(\varrho \uu)^\varepsilon}{\varrho^\varepsilon}\partial_t(\varrho \uu)^\varepsilon dxds + \int_{\tau}^{t}\int_{\mathbb{T}^d} \frac{(\varrho \uu)^\varepsilon}{\varrho^\varepsilon}\nabla \cdot (\varrho \uu \otimes\uu)^\varepsilon dxds + \blue{\int_{\tau}^{t}\int_{\mathbb{T}^d} \frac{(\varrho \uu)^\varepsilon}{\varrho^\varepsilon}\nabla (p(\vr))^\varepsilon dxds} \\
&\blue{-  \int_{\tau}^{t}\int_{\mathbb{T}^d} \nabla \cdot (\nu(\varrho) \mathbb{D}\uu)^\varepsilon \frac{(\varrho \uu)^\varepsilon}{\varrho^\varepsilon} dxds - \int_{\tau}^{t}\int_{\mathbb{T}^d} \nabla (\mu(\varrho) \di\uu)^\varepsilon \frac{(\varrho \uu)^\varepsilon}{\varrho^\varepsilon} dxds}= 0.
\end{aligned} \end{equation}
Denote by $(A), (B), (C), (D)$ and $(E)$ the terms on the left-hand side of \eqref{multiply} respectively. We will estimate them below. 
\subsection{Estimate of $(A)$} By using \eqref{Trho_approx}, we can compute
\begin{align*}
(A)&=\frac{1}{2} \int_{\tau}^{t}\int_{\mathbb{T}^d} \partial_t\left(\frac{|(\varrho \uu)^\varepsilon|^2}{\varrho^\varepsilon}\right)dxds + \frac{1}{2} \int_{\tau}^{t}\int_{\mathbb{T}^d} \partial_t \varrho^\varepsilon\frac{|(\varrho \uu)^\varepsilon|^2}{(\varrho^\varepsilon)^2}dxds \\
&= \frac{1}{2} \int_{\tau}^{t}\int_{\mathbb{T}^d} \partial_t\left(\frac{|(\varrho \uu)^\varepsilon|^2}{\varrho^\varepsilon}\right)dxds-\frac{1}{2} \int_{\tau}^{t}\int_{\mathbb{T}^d} \nabla \cdot (\varrho \uu)^\varepsilon\frac{|(\varrho \uu)^\varepsilon|^2}{(\varrho^\varepsilon)^2}dxds\\
&=: (A1) + (A2).
\end{align*}
We see that $(A1)$ is the desired term while $(A2)$ will be canceled with the term $(B3)$ later.
%It follows that
%\begin{align*}
%\limsup_{\varepsilon \to 0}\limsup_{\tau \to 0}\left|(A)- \frac{1}{2}\int_{\mathbb{T}^d} \frac{|(\varrho \uu)^\varepsilon|^2}{\varrho^\varepsilon} (x,t)dx+\frac{1}{2}\int_{\mathbb{T}^d} \frac{|(\varrho \uu)^\varepsilon|^2}{\varrho^\varepsilon} (x,\tau)dx+\frac{1}{2} \int_{\tau}^{t}\int_{\mathbb{T}^d} \nabla \cdot (\varrho \uu)^\varepsilon\frac{|(\varrho \uu)^\varepsilon|^2}{(\varrho^\varepsilon)^2}dxds\right| = 0.
%\end{align*}
%Thus
%\begin{equation} \label{eA1}
%\limsup_{\varepsilon \to 0}\left|(A)- \frac{1}{2}\int_{\mathbb{T}^d} (\varrho|\uu|^2)(x,t)dx+\frac{1}{2}\int_{\mathbb{T}^d} (\varrho_0|\uu_0|^2)(x)dx+\frac{1}{2} \int_{0}^{t}\int_{\mathbb{T}^d} \nabla \cdot (\varrho \uu)^\varepsilon\frac{|(\varrho \uu)^\varepsilon|^2}{(\varrho^\varepsilon)^2}dxds\right| = 0.
%\end{equation}

\subsection{Estimate of $(B)$} By integration by parts,
\begin{align*}
(B)&= - \int_{\tau}^{t}\int_{\mathbb{T}^d}\nabla \frac{(\varrho \uu)^\varepsilon}{\varrho^\varepsilon}(\varrho \uu \otimes\uu)^\varepsilon dxds\\
&= -\int_{\tau}^{t}\int_{\mathbb{T}^d}\nabla  \frac{(\varrho \uu)^\varepsilon}{\varrho^\varepsilon}[(\varrho \uu \otimes\uu)^\varepsilon-(\varrho \uu)^\varepsilon \otimes\uu^\varepsilon]dxds - \int_\tau^t\int_{\T^d}\na \frac{(\vr\uu)^\eps}{\vre}((\vr\uu)^\eps\otimes\ue)dxds\\
&=: (B1) +  \int_{\tau}^{t}\int_{\mathbb{T}^d}\frac{(\varrho \uu)^\varepsilon}{\varrho^\varepsilon}\nabla \cdot((\varrho \uu)^\varepsilon \otimes\uu^\varepsilon)dxds \\
&= (B1) +  \int_{\tau}^{t}\int_{\mathbb{T}^d}\nabla \cdot\uu^\varepsilon \frac{|(\varrho\uu)^\varepsilon|^2}{\varrho^\varepsilon}dxds+\frac{1}{2} \int_{\tau}^{t}\int_{\mathbb{T}^d} \frac{\uu^\varepsilon}{\varrho^\varepsilon} \nabla |(\varrho\uu)^\varepsilon|^2dxds \\
&= (B1) + \frac{1}{2}\int_{\tau}^{t}\int_{\mathbb{T}^d}\nabla \cdot \uu^\varepsilon\frac{|(\varrho\uu)^\varepsilon|^2}{\varrho^\varepsilon}dxds
-\frac{1}{2} \int_{\tau}^{t}\int_{\mathbb{T}^d} \uu^\varepsilon \nabla (\frac{1}{\varrho^\varepsilon})  |(\varrho\uu)^\varepsilon|^2dxds
\\&= (B1) + \frac{1}{2}\int_{\tau}^{t}\int_{\mathbb{T}^d}\nabla \cdot  (\varrho^\varepsilon \uu^\varepsilon) \frac{|(\varrho\uu)^\varepsilon|^2}{(\varrho^\varepsilon)^2}dxds\\
&= (B1) + \frac{1}{2}\int_{\tau}^{t}\int_{\mathbb{T}^d}\nabla \cdot [ (\varrho^\varepsilon \uu^\varepsilon) - (\vr\uu)^\eps] \frac{|(\varrho\uu)^\varepsilon|^2}{(\varrho^\varepsilon)^2}dxds + \frac 12 \int_{\tau}^{t}\int_{\mathbb{T}^d} \nabla \cdot (\varrho \uu)^\varepsilon\frac{|(\varrho \uu)^\varepsilon|^2}{(\varrho^\varepsilon)^2}dxds\\
&=: (B1) + (B2) + (B3).
\end{align*}
It is obvious that $(A2) + (B3) = 0$. We now estimate the remainders $(B1)$ and $(B2)$. By H\"older's inequality
\begin{equation} \label{M1} \begin{aligned}
|(B1)| &= \left|\int_{\tau}^{t}\int_{\mathbb{T}^d}\nabla \frac{(\varrho \uu)^\varepsilon}{\varrho^\varepsilon}[(\varrho \uu \otimes\uu)^\varepsilon-(\varrho \uu)^\varepsilon \otimes\uu^\varepsilon]dxds\right|\\
&\leq \left\|\nabla \frac{(\varrho \uu)^\varepsilon}{\varrho^\varepsilon}\right\|_{L^{4}(\T^d \times (0,T))} \|(\varrho \uu \otimes\uu)^\varepsilon-(\varrho \uu)^\varepsilon \otimes\uu^\varepsilon\|_{L^{\frac{4}{3}}(\T^d \times (0,T))}.
\end{aligned} \end{equation}
From Lemma \ref{GN} (for $d=2$) and the assumption \eqref{rhod=3} (for $d=3$), we see that $\uu\in L^4(\T^d \times (0,T) )$. Moreover, by assumption, $\uu \in L^2(0,T;H^1(\T^d))$. Therefore, we can apply Lemmas \ref{lemma1} (iii) and \ref{keyla} to obtain
\begin{equation} \label{X6''}
\limsup_{\varepsilon \to 0}\varepsilon\left\|\nabla \cdot \frac{(\varrho \uu)^\varepsilon}{\varrho^\varepsilon}\right\|_{L^{4}(\T^d \times (0,T))}=0,
\end{equation}
\begin{align}\label{X6'}
\limsup_{\varepsilon \to 0}\varepsilon^{-1}\|(\varrho \uu \otimes\uu)^\varepsilon-(\varrho \uu)^\varepsilon \otimes\uu^\varepsilon\|_{L^{\frac{4}{3}}(\T^d \times (0,T))}=0,
\end{align}
which yields 
\begin{equation}\label{B1}
	\limsup_{\eps\to 0}\limsup_{\tau \to 0}|(B1)| = 0.
\end{equation}

For $(B2)$ we first use integration by parts to have
\begin{equation} \label{Y1ss}  \begin{aligned}
|(B2)| &= \frac 12\left|\int_{\tau}^{t}\int_{\mathbb{T}^d}[(\varrho^\varepsilon \uu^\varepsilon) - (\vr\uu)^\eps]\nabla \cdot  \frac{|(\varrho\uu)^\varepsilon|^2}{(\varrho^\varepsilon)^2}dxds\right|\\
&\leq C \|(\varrho \uu)^\varepsilon-\varrho^\varepsilon\uu^\varepsilon\|_{L^2(\T^d \times (0,T))}\left\|\nabla \cdot \frac{|(\varrho \uu)^\varepsilon|^2}{(\varrho^\varepsilon)^2}\right\|_{L^2(\T^d \times (0,T))}.
\end{aligned} \end{equation}
One can justify from H\"older's inequality and Lemma \ref{lemma1} that
\begin{equation} \label{rem}
	\left\|\nabla \cdot \frac{|(\varrho \uu)^\varepsilon|^2}{(\varrho^\varepsilon)^2}\right\|_{L^2(\T^d \times (0,T))} \leq  C\eps^{-1}\|\uu\|_{L^4(\T^d \times (0,T))}^2.
\end{equation}
On the other hand,  it follows from Lemma \ref{keyla} that
\begin{align}\label{B2_0}
\limsup_{\varepsilon \to 0}\varepsilon^{-1}\|(\varrho \uu)^\varepsilon-\varrho^\varepsilon\uu^\varepsilon\|_{L^2(\T^d \times (0,T))}=0.
\end{align}
From \eqref{Y1ss} -- \eqref{B2_0}, we obtain
\begin{equation}\label{B2}
	\limsup_{\eps\to 0}\limsup_{\tau \to 0}|(B2)| = 0.
\end{equation} 

\subsection{Estimate of $(C)$} 
Integrating by parts and using \eqref{Trho_approx} lead to
\begin{equation} \label{C-a1} \begin{aligned}
	(C) &=  \int_{\tau}^{t}\int_{\mathbb{T}^d} \frac{(\varrho \uu)^\varepsilon}{\varrho^\varepsilon}\nabla [(p(\vr))^\varepsilon - p(\vre)] dxds + \int_{\tau}^{t}\int_{\T^d}\frac{(\varrho \uu)^\varepsilon}{\varrho^\varepsilon}\na p(\vre) dxds\\
	&=: (C1) - \int_{\tau}^{t}\int_{\mathbb{T}^d} \nabla \cdot (\varrho \uu)^\varepsilon \GG(\vre) dxds \\
	&= (C1) + \int_{\tau}^{t}\int_{\mathbb{T}^d} \partial_t\varrho^\varepsilon \GG(\vre) dxds,
\end{aligned} \end{equation}
where $\mathcal G$ is defined as
\begin{equation}\label{G}
\GG(z): = \int_1^z\frac{p'(\xi)}{\xi}d\xi.
\end{equation}
{\color{black} Recalling that $\HH$ is defined in \eqref{GH}, we can easily compute that
\begin{equation} \label{PG}
\partial_t \HH(\varrho^\varepsilon) = \partial_t \varrho^\varepsilon \left( \GG(\varrho^\varepsilon) + p(1)  \right), 
\end{equation}
which, together with \eqref{Trho_approx}, implies
\begin{equation} \label{C-a2} \begin{aligned}
\int_{\T^d} \partial_t \varrho^\varepsilon \GG(\varrho^\varepsilon) dx &= \int_{\T^d} \partial_t \HH(\varrho^\varepsilon)dx - p(1) \int_{\T^d} \partial_t \varrho^\varepsilon dx \\
&= \int_{\T^d} \partial_t \HH(\varrho^\varepsilon)dx + p(1) \int_{\T^d} \nabla \cdot (\varrho \uu)^\varepsilon dx \\
&= \int_{\T^d} \partial_t \HH(\varrho^\varepsilon)dx.
\end{aligned}  \end{equation}          
From \eqref{C-a1} and \eqref{C-a2}, we obtain
\begin{align*}
(C)=(C1) + \int_{\tau}^{t}\int_{\T^d}\pa_t \HH(\vre) dxds=: (C1) + (C2).
\end{align*}}

Since $(C2)$ is a desired term, we only need to estimate $(C1)$. 

We first consider the case $d=2$. We see that
\begin{equation} \label{X1} \begin{aligned}
|(C1)|&= \left|\int_{\tau}^{t}\int_{\mathbb{T}^2} \frac{(\varrho \uu)^\varepsilon}{\varrho^\varepsilon}\nabla [(p(\vr))^\varepsilon - p(\vre)]dxds\right|\\
&\leq \left|  \int_\tau^t \int_{\T^2} \frac{ (\varrho \uu)^\varepsilon - \varrho^\varepsilon \uu^\varepsilon  }{\varrho^\varepsilon} \nabla [(p(\vr))^\varepsilon - p(\vre)]  dxds \right|  + \left|  \int_\tau^t \int_{\T^2}   \nabla \cdot \uu^\varepsilon [(p(\vr))^\varepsilon - p(\vre)] dxds \right| \\
&\leq  \left\| \frac{ (\varrho \uu)^\varepsilon - \varrho^\varepsilon \uu^\varepsilon  }{\varrho^\varepsilon}\right\|_{L^1(\T^2 \times (0,T))} \blue{\| \nabla  [(p(\vr))^\varepsilon - p(\vre)]  \|_{L^\infty(\T^2 \times (0,T))}} \\
&\quad + \| \nabla \cdot \uu^\varepsilon\|_{L^1(0,T;L^\infty(\mathbb{T}^2))} \blue{\|(p(\vr))^\varepsilon-p(\vre)\|_{L^\infty(0,T;L^1(\T^2))}}.
\end{aligned} \end{equation}

We now deal with the first term on the right-hand side of \eqref{X1}. By Lemma \ref{lemma1} (i), we derive
\begin{align*}   
\blue{\| \nabla  [(p(\vr))^\varepsilon - p(\vre)]  \|_{L^\infty(\T^2 \times (0,T))} \leq \|\nabla (p(\vr))^\eps\|_{L^\infty(\T^2 \times (0,T))} + \|p'(\vr)\nabla \vre\|_{L^\infty(\T^2 \times (0,T))} \leq C\varepsilon^{-1}}
\end{align*}
thanks to $\varrho \in L^\infty(\T^2 \times (0,T))$.  Since $\uu \in L^2(0,T;H^1(\T^2))$, by Lemma \ref{keyla},
$$\limsup_{\varepsilon \to 0} \varepsilon^{-1}\left\| \frac{ (\varrho \uu)^\varepsilon - \varrho^\varepsilon \uu^\varepsilon  }{\varrho^\varepsilon}\right\|_{L^1(\T^2 \times (0,T))} \leq C \limsup_{\varepsilon \to 0} \varepsilon^{-1} \| (\varrho \uu)^\varepsilon - \varrho^\varepsilon \uu^\varepsilon \|_{L^1(\T^2 \times (0,T))} = 0.
$$
It follows that
\begin{equation} \label{X1-a}
\limsup_{\varepsilon \to 0} \left\| \frac{ (\varrho \uu)^\varepsilon - \varrho^\varepsilon \uu^\varepsilon  }{\varrho^\varepsilon} \right\|_{L^1(\T^2 \times (0,T))} \| \nabla  [(p(\vr))^\varepsilon - p(\varrho^\varepsilon)]  \|_{L^\infty(\T^2 \times (0,T))} =0.
\end{equation}
For the second term on the right hand side of \eqref{X1}, we first show that
\begin{equation} \label{L1-1} \blue{\|(p(\vr))^\varepsilon-p(\varrho^\varepsilon)\|_{L^\infty(0,T;L^1(\T^2))}\leq C\varepsilon}. 
\end{equation}
\blue{
In order to prove \eqref{L1-1}, we first use Taylor's expansion to estimate
%{\color{red} Indeed, in order to prove \eqref{L1-1}, we will use the following claim. \medskip
\begin{equation} \label{p}
|p(a+b) - p(a) - p'(a)b| = |p''(\xi)||b|^2 \leq \Lambda |b|^2,
\end{equation}	
for any $a,b$ such that $a, a+b \in (c_1, c_2)$ (with $c_1, c_2$ are in \eqref{Condition}) and $\Lambda = \sup_{\xi\in [c_1,c_2]}|p''(\xi)|$. Now, by writing 
\begin{equation*}
	\vr(x-y,s) = \vr(x,s) + [\vr(x-y,s) - \vr(x,s)]
\end{equation*}
and 
\begin{equation*}
	\int_{\T^2}\vr(x-y,s)\omega_\eps(y)dy = \vr(x,s) + \int_{\T^2}(\vr(x-y,s) - \vr(x,s))\omega_\eps(y)dy
\end{equation*}
we can apply \eqref{p} with $a = \vr(x,s)$ and $b = \vr(x-y,s) - \vr(x,s)$, and then $a = \vr(x,s)$ and $b = \int_{\T^2}(\vr(x-y,s)-\vr(x,s))\omega_\varepsilon(y)dy$ respectively (note that $a,a+b \in (c_1,c_2)$ thanks to \eqref{Condition}), to obtain
\begin{equation} \label{X3''}
\begin{aligned}
&|(p(\vr))^\eps(x,s) - p(\vre)(x,s)|\\
&\leq \left|\int_{\T^2}[p(\vr(x-y,s)) - p(\vr(x,s)) - p'(\vr(x,s))(\vr(x-y,s) - \vr(x,s))]\omega_\eps(y)dy\right|\\
&\quad  + \left|p\left(\int_{\T^2}\vr(x-y,s)\omega_\eps(y)du\right) - p(\vr(x,s)) - p'(\vr(s,x))\int_{\T^2}(\vr(x-y,s) - \vr(x,s))\omega_\eps(y)dy\right| \\
&\leq 2\Lambda \int_{\T^2}|\vr(x-y,s) - \vr(x,s)|^2\omega_\varepsilon(y)dy.
\end{aligned}
\end{equation}
This, together with assumption \eqref{rhod=2}, implies 
\begin{align*}
\|(p(\vr))^\varepsilon(\cdot,s)-p(\varrho^\varepsilon)(\cdot,s)\|_{L^1(\T^2)} &\leq 2\Lambda  \int_{\T^2}  \int_{\T^2}|\varrho(x-y,s) - \varrho(x,s)|^2\omega_\varepsilon(y)dydx  \\
&\leq 2\Lambda \varepsilon \sup_{|h|<\varepsilon}|h|^{-1}\| \varrho(\cdot + h,s) - \varrho(\cdot,s)\|_{L^2(\T^2)}^2 \\
&\leq 2\Lambda C \varepsilon.
\end{align*}
This yields \eqref{L1-1}.}

Thus the second term on the right hand side of \eqref{X1} is estimated as
\begin{equation}\label{X1-b}
	\|\na \cdot \ue\|_{L^1(0,T;L^\infty(\T^2))}\|(p(\vr))^\eps - p(\vre)\|_{L^\infty(0,T;L^1(\T^2))} \leq C\varepsilon\|\na\cdot \ue\|_{L^1(0,T;L^\infty(\T^2))}.
\end{equation}
By Lemma \ref{lemma1} (iv), 
$$
\lim_{\varepsilon \to 0} \varepsilon\|\na\cdot \ue\|_{L^1(0,T;L^\infty(\T^2))} =0.
$$
This and \eqref{X1-b} ensure
\begin{equation}\label{X1-b1}
	\limsup_{\eps \to 0}\|\na \cdot \ue\|_{L^1(0,T;L^\infty(\T^2))}\|p(\vre) - (p(\vr))^\eps\|_{L^\infty(0,T;L^1(\T^2))}  = 0.
\end{equation}
We conclude from \eqref{X1}, \eqref{X1-a} and \eqref{X1-b1} that in the case $d=2$,
\begin{equation}\label{C1-d2}
	\limsup_{\eps \to 0}\limsup_{\tau \to 0}|(C1)| = 0.
\end{equation}

For the case $d = 3$ we estimate by using integration by parts and H\"older's inequality that
\begin{equation} \label{K1} \begin{aligned}
|(C1)|&= \left|\int_{0}^{t}\int_{\mathbb{T}^3} \na \frac{(\varrho \uu)^\varepsilon}{\varrho^\varepsilon}[(p(\vr))^\varepsilon - p(\vre)] dxds\right|\\
&\leq  \left\| \nabla \cdot \frac{(\varrho \uu)^\varepsilon}{\varrho^\varepsilon}\right\|_{L^1(0,T;L^6(\mathbb{T}^3))} \blue{\|[(p(\vr))^\varepsilon - p(\vre)]\|_{L^\infty(0,T;L^\frac{6}{5}(\T^3)) }}.
\end{aligned} \end{equation}
From \eqref{X3''} (note that this estimate is independent of dimensions), H\"older's inequality and assumption \eqref{rhod=3}, 
we have
\begin{align*}
\|(p(\vr))^\varepsilon(\cdot,s)-p(\varrho^\varepsilon)(\cdot,s)\|_{L^{\frac{6}{5}}(\T^3)} &\leq C \left( \int_{\T^3}  \int_{\T^3}|\varrho(x-y,s) - \varrho(x,s)|^{\frac{12}{5}}\omega_\varepsilon(y)dydx \right)^{\frac{5}{6}}  \\
&\leq C \varepsilon \sup_{|h|<\varepsilon}|h|^{-1}\| \varrho(\cdot + h,s) - \varrho(\cdot,s)\|_{L^{\frac{12}{5}}(\T^3)}^2 \\
&\leq C \varepsilon.
\end{align*}
This implies
\begin{equation}\label{K3}
	\|(p(\vr))^\eps - p(\vre)\|_{L^\infty(0,T;L^{\frac 65}(\T^3))} \leq C\varepsilon.
\end{equation} 
Inserting \eqref{K3} into \eqref{K1} one has
\begin{equation}
	|(C1)| \leq C \varepsilon \left\| \nabla \cdot \frac{(\varrho \uu)^\varepsilon}{\varrho^\varepsilon}\right\|_{L^1(0,T;L^6(\mathbb{T}^3))}.
\end{equation}
Since $\varrho, \varrho^{-1} \in L^\infty(\T^3 \times (0,T))$ and $\uu \in L^2(0,T;H^1(\T^3)) \subset L^1(0,T;L^6(\T^3))$, by using Lemma \ref{lemma1} (iii), we deduce
\begin{equation*}
	\lim_{\varepsilon \to 0} \varepsilon \left\| \nabla \cdot \frac{(\varrho \uu)^\varepsilon}{\varrho^\varepsilon}\right\|_{L^1(0,T;L^6(\mathbb{T}^3))}=0.
\end{equation*}
Therefore, in the case $d = 3$,
\begin{equation}\label{C1-d3}
	\limsup_{\eps\to 0}\limsup_{\tau\to 0}|(C1)| = 0.
\end{equation}

\subsection{Estimate of $(D)$.} \blue{It is easy to see that
\begin{align*}
(D)  =   \int_{\tau}^{t}\int_{\mathbb{T}^d} \nabla \cdot (\nu(\varrho) \mathbb{D}\uu)^\varepsilon \uu^\varepsilon dxds -\int_{\tau}^{t}\int_{\mathbb{T}^d} \nabla \cdot (\nu(\varrho) \mathbb{D}\uu)^\varepsilon \frac{(\varrho \uu)^\varepsilon-\varrho^\varepsilon \uu^\varepsilon}{\varrho^\varepsilon} dxds.
\end{align*}
By  H\"older's inequality, Lemma \ref{lemma1} (ii), the fact that $\varrho \in L^\infty(\T^d \times (0,T))$ and $\nu \in C(\R_+)$,  we deduce
\begin{equation}\label{D0-1}
\begin{aligned}
\left|\int_{\tau}^{t}\int_{\mathbb{T}^d} \nabla \cdot (\nu(\varrho) \mathbb{D}\uu)^\varepsilon \frac{(\varrho \uu)^\varepsilon-\varrho^\varepsilon \uu^\varepsilon}{\varrho^\varepsilon} dxds\right| &\leq  \| \nabla \cdot (\nu(\varrho) \D \uu)^\varepsilon \|_{L^2(\T^d \times (0,T))} \| \frac{  (\varrho \uu)^\varepsilon-\varrho^\varepsilon \uu^\varepsilon }{\varrho^\varepsilon} \|_{L^2(\T^d \times (0,T))} \\
&\leq C \varepsilon^{-1}  \|  \nu(\varrho) \D \uu \|_{L^2(\T^d \times (0,T))} \|  (\varrho \uu)^\varepsilon-\varrho^\varepsilon \uu^\varepsilon \|_{L^2(\T^d \times (0,T))} \\
& \leq C \varepsilon^{-1}  \| \nabla \uu \|_{L^2(\T^d \times (0,T))} \| (\varrho \uu)^\varepsilon-\varrho^\varepsilon \uu^\varepsilon \|_{L^2(\T^d \times (0,T))}.
\end{aligned}
\end{equation}
Since $\nabla \uu\in L^2(\T^d \times (0,T))$ and $\varrho\in L^\infty(\T^d \times (0,T))$, by Lemma \ref{keyla} we get
\begin{equation} \label{eD0}
\limsup_{\varepsilon \to 0}\varepsilon^{-1}\|(\varrho \uu)^\varepsilon-\varrho^\varepsilon \uu^\varepsilon\|_{L^2(\T^d \times (0,T))}=0.
\end{equation}
Therefore,
\begin{equation}\label{for-ex1}
	\limsup_{\eps\to 0}\limsup_{\tau\to 0}\left|\int_{\tau}^{t}\int_{\mathbb{T}^d} \nabla \cdot (\nu(\varrho) \mathbb{D}\uu)^\varepsilon \frac{(\varrho \uu)^\varepsilon-\varrho^\varepsilon \uu^\varepsilon}{\varrho^\varepsilon} dxds\right| = 0.
\end{equation}}
\subsection{Estimate of $(E)$}\label{EstimateE}
\blue{We write
\begin{align*}
(E) &= - \int_{\tau}^t\int_{\T^d}\na (\mu(\vr)\di \uu)^\eps \ue dxds - \int_{\tau}^t\int_{\T^d}\na(\mu(\vr)\di \uu)^\eps \frac{(\vr\uu)^\eps - \vre \ue}{\vre}dxds\\
&= \int_\tau^t\int_{\T^d}(\mu(\vr)\di \uu)^\eps (\di \ue)dxds -  \int_{\tau}^t\int_{\T^d}\na(\mu(\vr)\di \uu)^\eps \frac{(\vr\uu)^\eps - \vre \ue}{\vre}dxds.
\end{align*}
Similarly to \eqref{for-ex1} we have
\begin{equation}
	\limsup_{\eps\to 0}\limsup_{\tau \to 0}\left|\int_{\tau}^t\int_{\T^d}\na(\mu(\vr)\di \uu)^\eps \frac{(\vr\uu)^\eps - \vre \ue}{\vre}dxds\right| = 0.
\end{equation}}

\subsection{Conclusion of the Proof of Theorem \ref{cNS-torus}}\label{sub:conclusion1}
Collecting all the above estimates and putting them into \eqref{multiply} we obtain
\begin{equation}
\begin{aligned}
\limsup_{\eps\to 0}\limsup_{\tau \to 0}\biggl|\int_{\tau}^{t}\int_{\T^d}\pa_t\left[\frac 12 \frac{|(\vr\uu)^\eps|^2}{\vre} + \HH(\vre)\right]dxds&- \int_{\tau}^{t}\int_{\mathbb{T}^d} \nabla \cdot (\nu(\varrho) \mathbb{D}\uu)^\varepsilon \uu^\varepsilon dxds\\
&-\int_{\tau}^{t}\int_{\mathbb{T}^d} (\mu(\vr)\di\uu)^\eps (\di \ue) dxds \biggr| = 0.
\end{aligned}
\end{equation}
Using the weak continuity of $\vr$ and $\vr\uu$ in \eqref{ini1} and \eqref{ini2}, and the limits
\blue{
\begin{align*}
\limsup_{\varepsilon \to 0}\limsup_{\tau \to 0}\left|\int_{\tau}^{t}\int_{\mathbb{T}^d}  \nabla \cdot (\nu(\varrho) \mathbb{D}\uu)^\varepsilon \uu^\varepsilon dxds + \int_0^t\int_{\T^d}\nu(\varrho)|\mathbb D\uu|^2dxds\right| = 0,
\end{align*}}
\blue{
\begin{align*}
\limsup_{\varepsilon \to 0}\limsup_{\tau \to 0}\left|\int_{\tau}^{t}\int_{\mathbb{T}^d}  (\mu(\vr)\di\uu)^\eps (\di \ue) dxds + \int_0^t\int_{\T^d}\mu(\varrho)|\di\uu|^2dxds\right| = 0,
\end{align*}}
we can finally conclude the proof of Theorem \ref{cNS-torus}. \qed

\section{Proof of Theorem \ref{cNS-domain}}\label{sec:thm2}

The proof of Theorem \ref{cNS-domain} is similar to that of Theorem \ref{cNS-torus}, except for the fact that we have to take care of the boundary layers when integrating by parts. More precisely, by smoothing \eqref{NS} we obtain
\begin{equation}\label{Drho_approx}
	\pa_t\vre + \na\cdot(\vr\uu)^\eps = 0, \quad \text{ in } \Omega_\eps,
\end{equation}
and
\begin{equation}\label{Du_approx}
	\pa_t(\vr\uu)^\eps + \na\cdot(\vr\uu\otimes\uu)^\eps + \na(p(\vr))^\eps - \di(\nu(\vr)\mathbb D\uu)^\eps - \nabla(\mu(\vr)\di\uu)^\eps = 0 \quad \text{ in } \Omega_\eps,
\end{equation}
recalling $\Omega_\varepsilon = \{x\in\Omega: d(x, \partial\Omega) >\varepsilon\}$. Take $0 < \eps < \eps_1/10 < \eps_2/10 < r_0/100$ we obtain by multiplying \eqref{Du_approx} by $(\vr\uu)^\eps/\vre$ and integrating on $(\tau, t)\times \Omega_{\eps_2}$ with $0 < \tau < t < T$ 
\begin{equation}\label{Y1}
\begin{aligned}
	&\int_{\tau}^{t}\int_{\Omega_{\eps_2}}  \frac{(\varrho \uu)^\varepsilon}{\varrho^\varepsilon}\partial_t(\varrho \uu)^\varepsilon dxds + \int_{\tau}^{t}\int_{\Omega_{\eps_2}} \frac{(\varrho \uu)^\varepsilon}{\varrho^\varepsilon}\nabla \cdot (\varrho \uu \otimes\uu)^\varepsilon dxds + \int_{\tau}^{t}\int_{\Omega_{\eps_2}} \frac{(\varrho \uu)^\varepsilon}{\varrho^\varepsilon}\nabla (p(\varrho))^\varepsilon dxds \\
	&-  \int_{\tau}^{t}\int_{\Omega_{\eps_2}} \nabla \cdot (\nu(\varrho) \mathbb{D}\uu)^\varepsilon \frac{(\varrho \uu)^\varepsilon}{\varrho^\varepsilon} dxds - \int_{\tau}^{t}\int_{\Omega_{\eps_2}} \nabla(\mu(\vr)\di\uu)^\eps \frac{(\varrho \uu)^\varepsilon}{\varrho^\varepsilon} dxds = 0.	
\end{aligned}
\end{equation}
Taking $\eps_3 > 0$ to be small, we integrate \eqref{Y1} with respect to $\eps_2$ on $(\eps_1, \eps_1+\eps_3)$ to get
\begin{equation}\label{Y2}
	\begin{aligned}
	&\frac{1}{\eps_3}\int_{\eps_1}^{\eps_1+\eps_3}\int_{\tau}^{t}\int_{\Omega_{\eps_2}}  \frac{(\varrho \uu)^\varepsilon}{\varrho^\varepsilon}\partial_t(\varrho \uu)^\varepsilon dxdsd\eps_2 + \frac{1}{\eps_3}\int_{\eps_1}^{\eps_1+\eps_3}\int_{\tau}^{t}\int_{\Omega_{\eps_2}} \frac{(\varrho \uu)^\varepsilon}{\varrho^\varepsilon}\nabla \cdot (\varrho \uu \otimes\uu)^\varepsilon dxdsd\eps_2\\
	& \blue{+ \frac{1}{\eps_3}\int_{\eps_1}^{\eps_1+\eps_3}\int_{\tau}^{t}\int_{\Omega_{\eps_2}} \frac{(\varrho \uu)^\varepsilon}{\varrho^\varepsilon}\nabla (p(\varrho))^\varepsilon dxdsd\eps_2 -  \frac{1}{\eps_3}\int_{\eps_1}^{\eps_1+\eps_3}\int_{\tau}^{t}\int_{\Omega_{\eps_2}} \nabla \cdot (\nu(\vr) \mathbb{D}\uu)^\varepsilon \frac{(\varrho \uu)^\varepsilon}{\varrho^\varepsilon} dxdsd\eps_2}\\
	&-  \blue{\frac{1}{\eps_3}\int_{\eps_1}^{\eps_1+\eps_3}\int_{\tau}^{t}\int_{\Omega_{\eps_2}} \nabla(\mu(\vr)\di \uu)^\eps \frac{(\varrho \uu)^\varepsilon}{\varrho^\varepsilon} dxdsd\eps_2= 0.}
	\end{aligned}
\end{equation}
We denote by $(F)$, $(G)$, $(H)$, $(K)$ and $(L)$ the first, second, third, forth and fifth terms on the left hand side of \eqref{Y2} respectively. We will estimate them separately in the following subsections. 
\subsection{Estimate of $(F)$}
This term is estimated similarly to $(A)$,
\begin{align*}
(F)&=\frac{1}{2} \frac{1}{\eps_3}\int_{\eps_1}^{\eps_1+\eps_3}\int_{\tau}^{t}\int_{\Omega_{\eps_2}} \partial_t\left(\frac{|(\varrho \uu)^\varepsilon|^2}{\varrho^\varepsilon}\right)dxdsd\eps_2 + \frac{1}{2}\frac{1}{\eps_3}\int_{\eps_1}^{\eps_1+\eps_3} \int_{\tau}^{t}\int_{\Omega_{\eps_2}} \partial_t \varrho^\varepsilon\frac{|(\varrho \uu)^\varepsilon|^2}{(\varrho^\varepsilon)^2}dxdsd\eps_2 \\
&= \frac{1}{2} \frac{1}{\eps_3}\int_{\eps_1}^{\eps_1+\eps_3}\int_{\tau}^{t}\int_{\Omega_{\eps_2}} \partial_t\left(\frac{|(\varrho \uu)^\varepsilon|^2}{\varrho^\varepsilon}\right)dxdsd\eps_2-\frac{1}{2}\frac{1}{\eps_3}\int_{\eps_1}^{\eps_1+\eps_3} \int_{\tau}^{t}\int_{\Omega_{\eps_2}} \nabla \cdot (\varrho \uu)^\varepsilon\frac{|(\varrho \uu)^\varepsilon|^2}{(\varrho^\varepsilon)^2}dxdsd\eps_2\\
&=: (F1) + (F2).
\end{align*}
The term $(F1)$ is desired, while $(F2)$ will be canceled by $(G4)$ later.
\subsection{Estimate of $(G)$}
We estimate $(G)$ similarly to $(B)$ by using integration by parts
\begin{align*}
(G)&= \thii\frac{(\varrho \uu)^\varepsilon}{\varrho^\varepsilon}\nabla \cdot [(\vr\uu\otimes\uu)^\eps - (\vr\uu)^\eps\otimes\ue] dxdsd\eps_2\\
&\quad + \thii \frac{(\vr\uu)^\eps}{\vre}\di((\vr\uu)^\eps\otimes\ue)\dxte\\
&=: (G1) + \thii \di\ue \frac{|(\vr\uu)^\eps|^2}{\vre}\dxte\\
&\quad + \frac 12 \thii \frac{\ue}{\vre}\na|(\vr\uu)^\eps|^2\dxte\\
&=: (G1) + \frac 12\thiip |(\vr\uu)^\eps|^2\frac{\ue}{\vre}n(\theta)\dHte\\
&\quad + \frac 12\thii \di[\vre\ue - (\vr\uu)^\eps]\frac{|(\vr\uu)^\eps|^2}{(\vre)^2}\dxte\\
&\quad + \frac 12 \thii \di(\vr\uu)^\eps\frac{|(\vr\uu)^\eps|^2}{(\vre)^2}\dxte\\
&=: (G1) + (G2)^{\mathrm{bdr}} + (G3) + (G4).
\end{align*}
The superscript $``\mathrm{bdr}"$ in $(G2)^{\mathrm{bdr}}$ means that this term contains a boundary layer. It's obvious that $(G4) + (F2) = 0$. The term $(G1)$ is estimated using integration by parts as
\begin{align*}
	(G1) &= -\thii \di \frac{(\vr\uu)^\eps}{\vre}[(\vr\uu\otimes\uu)^\eps - (\vr\uu)^\eps\otimes\ue]\dxte\\
	&\quad + \thiip [(\vr\uu\otimes\uu)^\eps - (\vr\uu)^\eps\otimes\ue]\frac{(\vr\uu)^\eps}{\vre}n(\theta)\dHte\\
	&=: (G11) + (G12)^{\mathrm{bdr}}.
\end{align*}
We estimate $(G11)$ similarly to $(B1)$ in \eqref{M1}--\eqref{X6'} and therefore
\begin{equation}
	\limsup_{\eps\to 0}\limsup_{\tau\to 0}|(G11)| = 0.
\end{equation}

The term $ (G12)^{\mathrm{bdr}}$ will be treated later on, together with other boundary terms.

For $(G3)$ it follows from integration by parts that
\begin{align*}
	(G3)&= -\frac 12\thii [\vre\ue - (\vr\uu)^\eps]\na\frac{|(\vr\uu)^\eps|^2}{(\vre)^2}\dxte\\
	&\quad + \frac 12\thiip [\vre\ue - (\vr\uu)^\eps]n(\theta)\frac{|(\vr\uu)^\eps|^2}{(\vre)^2}\dHte\\
	&=: (G31) + (G32)^{\mathrm{bdr}}.
\end{align*}
The term $(G31)$ is estimated similarly to $(B2)$ in \eqref{Y1ss}--\eqref{B2_0} and therefore
\begin{equation}
\limsup_{\eps\to 0}\limsup_{\tau\to 0}|(G31)| = 0.
\end{equation}

It remains to estimate the boundary terms $(G12)^{\mathrm{bdr}}$, $(G2)^{\mathrm{bdr}}$  and $(G32)^{\mathrm{bdr}}$. As for the term $(G12)^{\mathrm{bdr}}$, we use the coarea formula and then letting successively $\tau \to 0$ and $\varepsilon \to 0$ to obtain
%\begin{align*}
%|(F12)^{\mathrm{bdr}}| &\leq \frac{1}{\varepsilon_3}\int_{\tau}^t \int_{\Omega_{\varepsilon_1} \setminus \Omega_{\varepsilon_1 + \varepsilon_3}} |(\varrho \uu \otimes \uu)^\varepsilon - (\varrho \uu)^\varepsilon \otimes \uu^\varepsilon| \left|  \frac{(\varrho \uu)^\varepsilon}{\varrho^\varepsilon} \right| dxds \\
%&\leq  \frac{1}{\varepsilon_3} \| (\varrho \uu \otimes \uu)^\varepsilon - (\varrho \uu)^\varepsilon \otimes \uu^\varepsilon  \|_{L^\frac{4}{3}((\Omega_{\varepsilon_1} \setminus \Omega_{\varepsilon_1 + \varepsilon_3}) \times (0,T) )} \left\|  \frac{(\varrho \uu)^\varepsilon}{\varrho^\varepsilon} \right\|_{L^4((\Omega_{\varepsilon_1} \setminus \Omega_{\varepsilon_1 + \varepsilon_3}) \times (0,T) )} \\
%&\leq C \frac{\varepsilon}{\varepsilon_3} \| \uu \|_{L^4((\Omega \setminus \Omega_{2(\varepsilon_1 + \varepsilon_3)}) \times (0,T) )}^2 \| \uu \|_{L^2(0,T,H^1(\Omega \setminus \Omega_{2(\varepsilon_1 + \varepsilon_3)}))}.
%\end{align*}
%As a consequence,
\begin{equation}
	\limsup_{\eps\to 0}\limsup_{\tau\to 0}|(G12)^{\mathrm{bdr}}|  = 0.
\end{equation}

Next, we deal with the term $(G2)^{\mathrm{bdr}}$. By using the coarea formula, H\"older's inequality and Lemma \ref{Poincare}, we get
\begin{align*}
	\limsup_{\varepsilon_1, \varepsilon \to 0} \limsup_{\tau \to 0}|(G2)^{\mathrm{bdr}}| &= \limsup_{\varepsilon_1, \varepsilon \to 0} \limsup_{\tau \to 0} \left| \frac{1}{2\varepsilon_3} \int_\tau^t \int_{\Omega_{\varepsilon_1} \setminus \Omega_{\varepsilon_1 + \varepsilon_3} } |(\varrho \uu)^\varepsilon|^2 \frac{\uu^\varepsilon}{\varrho^\varepsilon} n(x) dx ds \right| \\
	&\leq \frac{C}{2\eps_3} \int_0^T\int_{\Omega \backslash\Omega_{\eps_3}}|\uu|^3dxds \\
	&\leq \frac{C}{2\eps_3}  \| \uu \|_{L^4((\Omega \setminus \Omega_{\varepsilon_3}) \times (0,T)) }^2 \| \uu  \|_{L^2((\Omega \setminus \Omega_{\varepsilon_3}) \times (0,T))} \\
	&\leq C  \| \uu \|_{L^4((\Omega \setminus \Omega_{\varepsilon_3}) \times (0,T))}^2 \| \nabla \uu  \|_{L^2((\Omega \setminus \Omega_{2\varepsilon_3} \times (0,T))}.
\end{align*}
Since $\uu \in L^4(\Omega \times (0,T))$ and $\uu \in L^2(0,T;H^1(\Omega))$, by letting $\varepsilon_3 \to 0$, we obtain  
\begin{equation*}
	\limsup_{\varepsilon_3 \to 0}\limsup_{\varepsilon_1,\varepsilon \to 0}\limsup_{\tau \to 0}|(G2)^{\mathrm{bdr}}| = 0.
\end{equation*} 
Next we treat the term $(G32)^{\mathrm{bdr}}$. Again, by employing the coarea formula and letting successively $\tau \to 0$ and $\varepsilon \to 0$, we derive
\begin{equation*}
\limsup_{\varepsilon \to 0}\limsup_{\tau\to 0}|(G32)^{\mathrm{bdr}}| = 0.
\end{equation*}
\subsection{Estimate of $(H)$}
By integration by parts, we have
\begin{align*}
(H)&= \thii \frac{(\vr\uu)^\eps}{\vre}\na[(p(\vr))^\eps - p(\vre)]\dxte+ \thii \frac{(\vr\uu)^\eps}{\vre}\na p(\vre) \dxte\\
&= -\thii \di \frac{(\vr\uu)^\eps}{\vre}[(p(\vr))^\eps - p(\vre)]\dxte\\
&\quad + \thiip \frac{(\vr\uu)^\eps}{\vre}n(\theta)[(p(\vr))^\eps - p(\vre)]\dHte\\
&\quad - \thii \di(\vr\uu)^\eps \mathcal{G}(\vre)\dxte\\
&\quad + \thiip (\vr\uu)^\eps n(\theta)\mathcal{G}(\vre)\dHte\\
&=: (H1) + (H2)^{\mathrm{bdr}} + (H3) + (H4)^{\mathrm{bdr}},
\end{align*}
recalling $\mathcal G$ is defined in \eqref{G}.

By \eqref{Drho_approx}, \eqref{PG} and Gauss-Green thereom, we have 
\begin{align*}
(H3) &= \thii \partial_t \varrho^\varepsilon \mathcal{G}(\vre)\dxte \\
&= \thii \pa_t\HH(\vre) \dxte - p(1)\thii \partial_t \varrho^\varepsilon \dxte \\
&= \thii \pa_t\HH(\vre) \dxte + p(1)\thii \nabla \cdot (\varrho \uu)^\varepsilon \dxte \\
&=\thii \pa_t\HH(\vre) \dxte  + \frac{p(1)}{\varepsilon_3} \int_{\varepsilon_1}^{\varepsilon_1 + \varepsilon_3} \int_\tau^t \int_{\partial \Omega_{\varepsilon_2}}(\varrho \uu)^\varepsilon n(\theta)d\mathcal{H}^{d-1}(\theta)dsd\varepsilon_2 \\
&=: (H31) + (H32)^{\mathrm{brd}},
\end{align*}
where we recall that $\HH$ is defined in \eqref{GH}.  

The term $(H31)$ is a desired term. We now estimate the term $(H32)^{\mathrm{brd}}$ by using the coarea formula \eqref{coarea}, the fact $\mathcal L^d(\Omega \backslash\Omega_{\eps_3}) \approx \eps_3$, H\"older's inequality and Lemma \ref{Poincare},
\begin{align*}
\limsup_{\varepsilon_1,\varepsilon \to 0}\limsup_{\tau \to 0}|(H32)^{\mathrm{brd}}| &= \limsup_{\varepsilon_1,\varepsilon \to 0}\limsup_{\tau \to 0}  \frac{p(1)}{\varepsilon_3}\left|  \int_\tau^t \int_{\Omega_{\varepsilon_1} \setminus \Omega_{\varepsilon_1 + \varepsilon_3}}(\varrho \uu)^\varepsilon n(x)dxds \right| \\
&\leq  \frac{Cp(1)}{\varepsilon_3}\int_0^T\int_{\Omega \backslash\Omega_{\eps_3}}|\uu|dxds \\
&\leq \frac{C}{\varepsilon_3}(T\mathcal{L}^d(\Omega\backslash\Omega_{\eps_3}))^{\frac 12}\left(\int_0^T\int_{\Omega\backslash\Omega_{\eps_3}}|\uu|^2dxds\right)^{\frac 12}\\
&\leq Cp(1)\varepsilon_3^{\frac{1}{2}}\| \nabla \uu \|_{L^2((\Omega \setminus \Omega_{2\varepsilon_3}) \times (0,T))}.
\end{align*}
Since $\uu \in L^2(0,T;H^1(\Omega))$, it follows that 
\begin{equation*}
	\limsup_{\varepsilon_3 \to 0}\limsup_{\varepsilon_1,\varepsilon \to 0}\limsup_{\tau \to 0}|(H32)^{\mathrm{bdr}}|= 0.
\end{equation*}

The term $(H1)$ is rewritten as
\begin{align*}
(H1)&= \thii \frac{(\vr\uu)^\eps - \vre\ue}{\vre}\na[(p(\vr))^\eps - p(\vre)]\dxte\\
&\quad + \thii (\di \ue)[(p(\vr))^\eps - p(\vre)]\dxte\\
&\quad + \thiip [(p(\vr))^\eps - p(\vre)]\ue n(\theta)\dHte\\
&=: (H12) + (H13) + (H14)^{\mathrm{bdr}}.
\end{align*}
We can handle $(H12)$ and $(H13)$ using similar arguments to estimate of $(C1)$ in \eqref{X1} and \eqref{K1}, and thus
\begin{equation*}
\limsup_{\eps\to 0}\limsup_{\tau\to 0}|(H12)| = 0 \quad \text{ and } \quad \limsup_{\eps\to 0}\limsup_{\tau\to 0}|(H13)| = 0.
\end{equation*}

It remains to estimate the terms involving the boundary $(H14)^{\mathrm{bdr}}$, $(H2)^{\mathrm{bdr}}$ and $(H4)^{\mathrm{bdr}}$. By using the coarea formula and letting successively $\tau \to 0$ and $\varepsilon \to 0$, we assert that
\begin{align*}
\limsup_{\varepsilon \to 0}\limsup_{\tau \to 0} |(H14)^{\mathrm{bdr}}| = 0 \quad \text{and} \quad \limsup_{\varepsilon \to 0}\limsup_{\tau \to 0} |(H2)^{\mathrm{bdr}}| = 0.
\end{align*}
{\color{black} To deal with the term $(H4)^{\mathrm{bdr}}$, we use the coarea formula \eqref{coarea} to get 
\begin{align*}
	\limsup_{\varepsilon_1,\varepsilon \to 0}\limsup_{\tau \to 0}|(H4)^{\mathrm{bdr}}| &= \limsup_{\varepsilon_1,\varepsilon \to 0}\limsup_{\tau \to 0}\frac{1}{\eps_3}\left|\int_\tau^t\int_{\Omega_{\eps_1}\backslash\Omega_{\eps_1+\eps_3}}(\vr\uu)^\eps n(x)\mathcal{G}(\vre)dxds\right| \\
	& \leq \frac{C}{\varepsilon_3}\int_0^T\int_{\Omega \backslash\Omega_{\eps_3}}|\uu|dxds.
\end{align*}
Then by using an argument similar to the one used to estimate the term $(H32)^{\mathrm{brd}}$, we obtain 
\begin{equation*}
	\limsup_{\varepsilon_3 \to 0}\limsup_{\varepsilon_1,\varepsilon \to 0}\limsup_{\tau \to 0}|(H4)^{\mathrm{bdr}}|= 0.
\end{equation*}
}

\subsection{Estimate of $(K)$}
By rewriting $(K)$ as
\begin{align*}
(K)&= -\frac{1}{\eps_3}\int_{\eps_1}^{\eps_1+\eps_3}\int_{\tau}^{t}\int_{\Omega_{\eps_2}} \nabla \cdot (\nu(\varrho) \mathbb{D}\uu)^\varepsilon \frac{(\varrho \uu)^\varepsilon - \vre\ue}{\varrho^\varepsilon} dxdsd\eps_2\\
&\quad - \thii \di(\nu(\vr)\mathbb D\uu)^\eps \ue \dxte\\
&=: (K1) - \thiip (\nu(\vr)\mathbb D\uu)^\eps \ue n(\theta)\dHte\\
&\quad + \thii (\nu(\vr)\mathbb D\uu)^\eps \na \ue \dxte\\
&=: (K1) + (K2)^{\mathrm{bdr}} + (K3).
\end{align*}
Estimate $(K1)$ using arguments similar to \eqref{D0-1} and \eqref{for-ex1} we have
\begin{equation*}
	\limsup_{\eps\to 0}\limsup_{\tau \to 0}|(K1)| = 0.
\end{equation*}
The boundary term is computed using coarea formula \eqref{coarea}  and Lemma \ref{Poincare} as follows
\begin{equation}\label{H2}
\begin{aligned}
\limsup_{\varepsilon_1,\varepsilon \to 0}\limsup_{\tau \to 0}|(K2)^{\mathrm{bdr}}|&= \limsup_{\varepsilon_1,\varepsilon \to 0}\limsup_{\tau \to 0}  \left|\frac{1}{\eps_3}\int_\tau^t\int_{\Omega_{\varepsilon_1}\backslash\Omega_{\varepsilon_1 + \eps_3}}(\nu(\vr)\mathbb D\uu)^\eps \ue\, n(x)dxds\right|\\ 
&\leq  C\frac{1}{\eps_3} 
\| \nabla \uu \|_{L^2( (\Omega\backslash\Omega_{\eps_3}) \times (0,T))}  \| \uu \|_{L^2( (\Omega\backslash\Omega_{\eps_3}) \times (0,T))}\\ 
&\leq  C\| \nabla \uu \|_{L^2( (\Omega\backslash\Omega_{2\eps_3}) \times (0,T))}^2.
\end{aligned}
\end{equation}
Due to the assumption $\uu \in L^2(0,T;H^1(\Omega))$, by letting $\varepsilon_3 \to 0$, we get
\begin{equation*}
	\limsup_{\varepsilon_3 \to 0}\limsup_{\varepsilon_1,\varepsilon \to 0}\limsup_{\tau \to 0}|(K2)^{\mathrm{bdr}}| = 0.
\end{equation*}
\subsection{Estimate of $(L)$}
To deal with $(L)$ we estimate similarly to the estimate of $(E)$ in Subsection \ref{EstimateE}. It only remains to control the extra boundary terms
\begin{equation*}
(L)^{\mathrm{bdr}} =  - \thiip (\mu(\vr)\di \uu)^\eps \ue n(\theta)\dHte.
\end{equation*}
This term can be estimated by using an argument similar to the one leading to \eqref{H2}. Therefore we obtain
\begin{equation*}
\limsup_{\varepsilon_3 \to 0}\limsup_{\varepsilon_1, \varepsilon\to 0} \limsup_{\tau \to 0}|(L)^{\mathrm{bdr}}| = 0.
\end{equation*}

\subsection{Conclusion of the Proof of Theorem \ref{cNS-domain}}
Collecting the estimates for $(F)$, $(G)$, $(H)$, $(K)$ and $(L)$ and using similar arguments to subsection \ref{sub:conclusion1} we obtain the desired energy equality \eqref{eq_cNSd}. \qed

%\section{Proof of Theorem \ref{cNS-normal-both}}
%The proof of Theorem \ref{cNS-normal-both} follows closely from that of Theorems \ref{cNS-torus} and \ref{cNS-domain} except we have to take extra care of the terms $-2\nu \Delta \uu$ and $-\lambda \na(\di \uu)$.
%
%\medskip
%In the case $\Omega = \T^d$, by multiplying the smoothed version of \eqref{cNS-normal},
%\begin{equation*}
%	\pa_t(\vr\uu)^\eps + \di (\vr\uu\otimes \uu)^\eps + \na(\vr^\gamma)^\eps - 2\nu \Delta \ue - \lambda \na(\di \uu)^\eps = 0
%\end{equation*}
%by $(\vr\uu)^\eps/\vre$ we can proceed similarly to the proof of Theorem \ref{cNS-torus} except for the extra terms
%
%\medskip
%In the case $\Omega \subset \R^d$ is a bounded domain with $C^2$ boundary, by proceeding as in section \ref{sec:thm2} and using the arguments dealing with \eqref{extra1} and \eqref{extra2}, we are left to estimate only the extra boundary terms
%\begin{equation*}
%(BEX1) = -2\nu\thiip \na \ue \ue n(\theta)\dHte,
%\end{equation*}
%and
%\begin{equation*}
%(BEX2) = -\lambda \thiip (\di \uu)^\eps \ue n(\theta)\dHte.
%\end{equation*}
%Both of these terms can be estimated by using an argument similar to the one leading to \eqref{H2}. Therefore we obtain
%\begin{equation*}
%\limsup_{\varepsilon_3 \to 0}\limsup_{\varepsilon_1, \varepsilon\to 0} \limsup_{\tau \to 0}|(BEX1)| = 0 \quad \text{ and } \quad \limsup_{\varepsilon_3 \to 0}\limsup_{\varepsilon_1, \varepsilon\to 0} \limsup_{\tau \to 0}|(BEX2)| = 0.
%\end{equation*}
%Thus the proof of Theorem \ref{cNS-normal-both} is complete. \qed
\section{Proof of Theorem \ref{icNS-both}}
Thanks to the proof of Theorems \ref{cNS-torus} and \ref{cNS-domain} we only need to take care of the terms concerning the scalar pressure.

\medskip
In case $\Omega = \T^d$, we multiply
\begin{equation*}
	\pa_t(\vr\uu)^\eps + \di(\vr\uu\otimes\uu)^\eps + \na P^\eps - \di(\nu(\vr)\mathbb D\uu)^\eps = 0
\end{equation*}
by $(\vr\uu)^\eps/\vre$. Then we only need to take care of the following term (since other terms can be estimated similarly as in the previous Theorems)
\begin{align*}
	\int_{\tau}^t\int_{\T^d}\na P^\eps \frac{(\vr\uu)^\eps}{\vre}dxds &= \int_{\tau}^t\int_{\T^d}\na P^\eps\frac{(\vr\uu)^\eps - \vre\ue}{\vre}dxds + \int_{\tau}^t\int_{\T^d}\na P^\eps \ue dxds\\
	&= \int_{\tau}^t\int_{\T^d}\na P^\eps\frac{(\vr\uu)^\eps - \vre\ue}{\vre}dxds.
\end{align*}
At the last step we have made used of the free divergence condition. Using H\"older's inequality and Lemma \ref{lemma1} (ii), we have
\begin{equation}\label{K0}
\begin{aligned}
	\left|\int_{\tau}^t\int_{\T^d}\na P^\eps\frac{(\vr\uu)^\eps - \vre\ue}{\vre}dxds\right|&\leq \| \nabla P^\varepsilon \|_{L^2(\T^d \times(0,T) )} \| \frac{  (\vr\uu)^\eps - \vre \ue }{\varrho^\varepsilon} \|_{L^2(\T^d \times (0,T))}   \\
	&\leq C\|P\|_{L^2(\T^d \times (0,T))}\eps^{-1}\|(\vr\uu)^\eps - \vre \ue\|_{L^2(\T^d \times (0,T))}.
\end{aligned}
\end{equation}
By assumption $P\in L^2(\T^d \times (0,T))$ and Lemma \ref{keyla} we obtain the desired limit
\begin{equation*}
	\limsup_{\eps\to 0}\limsup_{\tau\to 0}\left|\int_{\tau}^t\int_{\T^d}\na P^\eps \frac{(\vr\uu)^\eps}{\vre}dxds\right| = 0
\end{equation*}
and therefore finish the proof in the case $\Omega = \T^d$.

\medskip
In case $\Omega$ is a bounded domain, we need to take care of the boundary. Similar to the case of a torus, we only need to deal with the term
\begin{align*}
	&\thii\na P^\eps \frac{(\vr\uu)^\eps}{\vre}dxds\\
	&= \thii\na P^\eps\frac{(\vr\uu)^\eps - \vre\ue}{\vre}dxds + \thii\na P^\eps \ue dxds\\
	&= \thii\na P^\eps\frac{(\vr\uu)^\eps - \vre\ue}{\vre}dxds + \thiip P^\eps\ue n(\theta)\dHte\\
	&=: (J1) + (J2)^{\mathrm{bdr}}.
\end{align*}
The term $(J1)$ is estimated exactly as in \eqref{K0} and therefore,
\begin{equation*}
	\limsup_{\varepsilon \to 0}\limsup_{\tau \to 0}|(J1)| = 0.
\end{equation*}
For $(J2)^{\mathrm{bdr}}$ we use the coarea formula, H\"older's inequality and Lemma \ref{Poincare} to obtain
\begin{align*}
\limsup_{\varepsilon_1,\varepsilon \to 0} \limsup_{\tau \to 0}|(J2)^{\mathrm{bdr}}| &=\limsup_{\varepsilon_1,\varepsilon \to 0}\limsup_{\tau \to 0} 
\left|  \frac{1}{\varepsilon_3} \int_{\tau}^t \int_{\Omega_{\varepsilon_1} \setminus \Omega_{\varepsilon_1 +\varepsilon_3}}  P^\varepsilon \uu^\varepsilon n(x) dx ds \right| \\
&\leq \frac{1}{\varepsilon_3}  \| P \|_{L^2((\Omega \backslash\Omega_{\eps_3}) \times (0,T))} \| \uu \|_{L^2((\Omega \backslash\Omega_{\eps_3}) \times (0,T))} \\
&\leq C   \| P \|_{L^2((\Omega \backslash\Omega_{\eps_3}) \times (0,T))} \| \nabla \uu \|_{L^2((\Omega \backslash\Omega_{2\eps_3}) \times (0,T))}. 
\end{align*}
Since $P \in L^2(\Omega \times (0,T))$ and $\uu \in L^2(0,T;H^1(\Omega))$,  by letting $\varepsilon_3 \to 0$, we derive
\begin{equation*}
	\limsup_{\varepsilon_3\to 0} \limsup_{\varepsilon_1, \varepsilon \to 0}\limsup_{\tau \to 0}|(J2)^{\mathrm{bdr}}| = 0.
\end{equation*}
Thus the proof of Theorem \ref{icNS-both} is complete. \qed

\section{Proof of Theorem \ref{thm:org_NS}}
The proof of Theorem \ref{thm:org_NS} follows exactly from that of Theorem \ref{icNS-both} except for the term relating to the pressure. However, since in this case we can take $\vr \equiv 1$ and therefore $(J1) = 0$ trivially. From the estimate of $(J2)^{\mathrm{bdr}}$ 
\begin{equation*}
\limsup_{\eps_1,\eps\to 0}\limsup_{\tau\to 0}|(J2)^{\mathrm{bdr}}| \leq C\|P\|_{L^2(\Omega\backslash\Omega_{\eps_3}\times(0,T))}\|\na \uu\|_{L^2(\Omega\backslash\Omega_{2\eps_3}\times(0,T))}
\end{equation*}
we use $\uu \in L^2(0,T;H^1(\Omega))$ and the assumption \eqref{pressure} to conclude $$\limsup_{\varepsilon_3\to 0} \limsup_{\varepsilon_1, \varepsilon \to 0}\limsup_{\tau \to 0}|(J2)^{\mathrm{bdr}}| = 0$$ and therefore the proof of Theorem \ref{thm:org_NS} is finished. \qed

\medskip
 \par{\bf Acknowledgements:} 
We would like to thank the referees for their comments, which help to improve the paper.
 
Phuoc-Tai Nguyen was supported by Czech Science Foundation, project
GJ19-14413Y. Bao Quoc Tang was supported by the International Training Program IGDK 1754 and NAWI Graz.

\end{document}